\theoremstyle{plain}
\newtheorem{thm}{Theorem}[section]
 \newtheorem{cor}[thm]{Corollary}
 \newtheorem{lem}[thm]{Lemma}
 \newtheorem{prop}[thm]{Proposition}
\newtheorem{problem}[thm]{Problem}
\newtheorem{rem}[thm]{Remark}
\begin{document}

\title[Boundary slopes of alternating knots]{Non-integral boundary slopes of alternating knots}

\author{Masaharu Ishikawa}
\address[M. Ishikawa]{Department of Mathematics, Hiyoshi Campus, Keio University, 4-1-1 Hiyoshi, Kohoku-ku, Yokohama-shi, Kanagawa, 223-8521, Japan}
\email{ishikawa@keio.jp}
\author{Thomas W.~Mattman}
\address[T. W. Mattman]{Department of Mathematics and Statistics,
California State University, Chico,
Chico, CA 95929-0525}
\email{TMattman@CSUChico.edu}
\author{Koya Shimokawa} 
\address[K. Shimokawa]{Department of Mathematics, Ochanomizu University, 2-1-1 Otsuka, Bunkyo-ku, Tokyo 112-8610, Japan}
\email{shimokawa.koya@ocha.ac.jp}

\begin{abstract}
We show, for every positive integer $n$, there is an alternating knot having a boundary slope with denominator $n$. 
We make use of Kabaya's method for boundary slopes and the layered solid torus construction introduced
by Jaco and Rubinstein and further developed by Howie et al.\
\end{abstract}

\maketitle

\section{Introduction}

Hatcher and Thurston~\cite{HT} showed every boundary slope of a
$2$-bridge knot is an even integer.
Hatcher and Oertel~\cite{HO} demonstrated that, while every rational number can be
a boundary slope of a Montesinos knot,
an alternating Montesinos knot has only even integral boundary slopes.
They asked if this might hold more generally: Is it true that every alternating knot has only even integral boundary slopes?
Kabaya \cite{Kabaya} discovered alternating knots (including $10_{82}$) having odd integral boundary slopes
and even non-integral boundary slopes (e.g., $10_{79}$),
by applying his method for the deformation variety  \cite{Kabaya-JKTR}.
In \cite{DG}, Dunfield and Garoufalidis produced many additional examples of
non-integral boundary slopes for alternating knots. Their examples include slopes with denominators 2, 3, and 5. 

In this paper, we  generate non-integral rational boundary slopes for two infinite families of alternating knots, see Figure~\ref{fig:Kn}.

\begin{figure}
\includegraphics[width=0.4\textwidth]{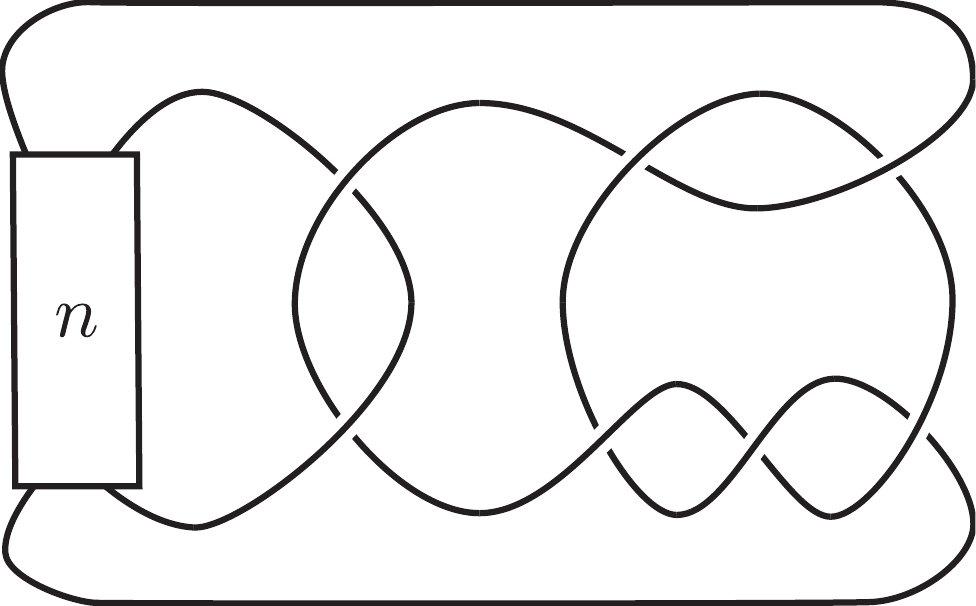}
\quad
\includegraphics[width=0.4\textwidth]{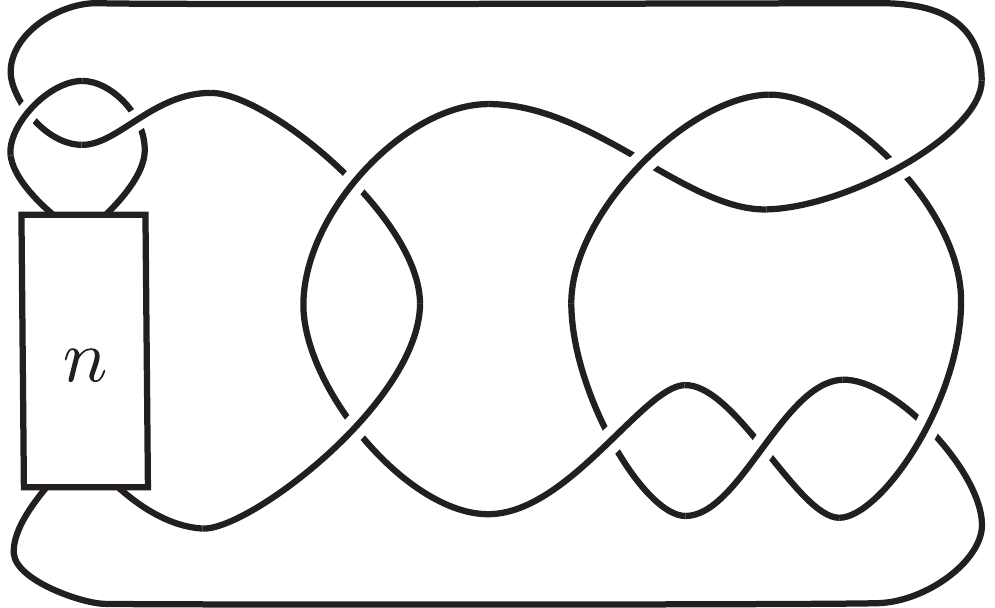}
\caption{
\label{fig:Kn}
The alternating knots $K_n$ (left) and $J_n$ (right): $K_3=10_{79}$, $K_5=12a815$, $K_7=14a12383$, $J_2 = 11a49$, $J_3 = 12a344$,
$J_4 = 13a1413$.
Here $n$ indicates the number of  half twists.}
\end{figure}

\begin{thm} 
\label{thm:Kn}
Let $n \geq 3$ be odd. The alternating knot $K_n$ has 
boundary slope  $\frac{2n^2 - 10n + 2}{n}$.
\end{thm}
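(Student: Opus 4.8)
The plan is to construct an explicit essential surface in the knot complement whose boundary slope is the claimed value, using the tools the authors advertise in the abstract: Kabaya's method together with the layered-solid-torus construction. Since the knots $K_n$ are presented as alternating diagrams with $n$ half-twists in one region, I would first fix an ideal triangulation of the complement $S^3 \setminus K_n$ that is adapted to this twist region. The layered solid torus machinery of Jaco--Rubinstein is precisely the device for triangulating the neighborhood of a twisted band with a controlled, $n$-dependent combinatorial structure, so I would build the triangulation as a fixed ``outer'' piece (independent of $n$) glued to a layered solid torus whose number of layers grows linearly with $n$. This reduces the infinite family to a single combinatorial template with one integer parameter.

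Next I would pass to Kabaya's deformation variety / normal-surface setup on this triangulation. The key point of Kabaya's method is that essential surfaces (and hence boundary slopes) correspond to suitable solutions of the gluing and completeness equations, and the boundary slope is read off from how the surface meets a chosen meridian--longitude basis on $\partial N(K_n)$. Concretely, I would identify a normal (or spun-normal) surface $S$ carried by the triangulation, compute its intersection numbers with the meridian $\mu$ and longitude $\lambda$, and thereby obtain the slope as a ratio. Because the triangulation splits into the $n$-independent outer block and the $n$-layer solid torus, both intersection numbers will be affine functions of $n$: the longitudinal (numerator) count accumulates a quadratic contribution $2n^2 - 10n + 2$ from summing over the layers, while the meridional (denominator) count is exactly $n$, one intersection per layer. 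Verifying that the resulting surface is genuinely essential (incompressible and boundary-incompressible, not boundary-parallel) is what certifies it gives an actual boundary slope rather than a spurious one; for alternating knots one can lean on the diagrammatic/taut structure, but I would also check essentiality directly from the normal-surface data.

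The main obstacle I anticipate is twofold. First, getting the \emph{denominator} to be exactly $n$ (rather than a proper divisor) requires showing that the meridian is wrapped a full $n$ times with no cancellation, i.e. that $\gcd$ of the two intersection numbers is $1$; one checks $\gcd\bigl(2n^2 - 10n + 2,\, n\bigr) = \gcd(2, n) = 1$ since $n$ is odd, which is exactly why the hypothesis ``$n \geq 3$ odd'' appears, and this number-theoretic coincidence must be matched by an honest geometric computation of the two boundary curves. Second, and more seriously, is the bookkeeping of how the layered solid torus is attached to the outer block: the quadratic growth of the numerator is sensitive to the precise layering pattern, so I would need to track the framing change through each layer carefully and confirm that the parameter $n$ in the construction coincides with the number of half-twists in the diagram. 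The remaining pieces (the small cases $K_3 = 10_{79}$, $K_5 = 12a815$, $K_7 = 14a12383$ serving as base-case sanity checks) are routine verifications one can run against existing boundary-slope tables.
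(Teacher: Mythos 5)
Your high-level architecture --- a fixed outer triangulation glued to a layered solid torus that grows with $n$, then Kabaya's machinery --- is exactly the strategy of the paper, but your account of what Kabaya's method does contains a genuine gap that would derail the execution. The method is \emph{not} ``find a (spun-)normal surface carried by the triangulation, compute its intersection numbers with $\mu$ and $\lambda$, then verify essentiality.'' It operates on ideal points of the deformation variety: one chooses degeneration indices $I$ (an entry in $\{0,1,\infty\}$ for each tetrahedron, recording how its shape parameter degenerates), forms the degeneration matrix $R(I)$ from the gluing equations, and checks that the vector $d(I)$ of alternating-sign maximal minors has all entries of one sign. Kabaya's Theorem 4.1 then guarantees an ideal point exists, and his Theorem 4.6 says that $-v(\lambda)/v(\mu)$, computed as wedge products of the meridian/longitude exponent vectors with the degeneration data, is the boundary slope of an essential surface associated to that ideal point. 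Essentiality is automatic and is never checked separately; your proposed fallback of verifying incompressibility ``directly from the normal-surface data'' is not a routine check --- it is essentially the content of the Dunfield--Garoufalidis incompressibility criterion --- and the point of the ideal-point formulation is precisely to avoid it.

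Second, the step you wave away (``both intersection numbers will be affine functions of $n$\ldots the numerator accumulates a quadratic contribution from summing over the layers'') is where all the work lies: one must exhibit, for \emph{every} odd $n \geq 5$, degeneration indices whose vector $d(I_1)$ is sign-coherent, together with its exact value. The paper does this inductively: adding one layered tetrahedron changes $R(I_1)$ only by appending a fixed row and column $(0,\ldots,0,1,-2)$ (Lemma~\ref{lem:RI1}), whence the degeneration vectors satisfy the recursion $\mathbf{d}_{n+4} = -2(\mathbf{d}_{n+2},0) - (\mathbf{d}_{n},0,0) + (-1)^{\frac{n+3}{2}}(\mathbf{0},0,2)$, solved in closed form by a two-step induction whose base cases $\mathbf{d}_5$, $\mathbf{d}_7$ are computed directly (Proposition~\ref{prop:I1degv}); there are no pre-existing slope tables for $K_7 = 14a12383$ to check against. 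Some of your supporting claims are also off. The denominator $n$ does not arise as ``one intersection per layer'': the layered solid torus has one tetrahedron per \emph{two} half-twists, $v(\mu)=\pm n$ falls out of the wedge-product computation, and the quadratic numerator comes largely from the framing correction (four meridians are subtracted from the longitude for each added tetrahedron, since the underlying link $L$ has linking number $2$). And the hypothesis ``$n \geq 3$ odd'' is not there to force $\gcd(2n^2-10n+2,n)=1$; it is there because $K_n$ is a knot only for odd $n$ (it is the $-\frac{1}{(n+1)/2}$ filling of one component of $L$), while the gcd observation is relevant only to Corollary~\ref{cor:main} about denominators, not to the theorem itself. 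Finally, $n=3$ is not covered by the induction at all: the paper quotes Kabaya and Dunfield--Garoufalidis for $10_{79}$.
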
 

\begin{thm} 
\label{thm:Jn}
Let $n \geq 2$ be even. The alternating knot $J_n$ has 
boundary slopes  $- \frac{14n-2}{n}$ and $- \frac{10n+8}{n+1}$.
\end{thm}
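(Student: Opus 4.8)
The plan is to produce both slopes from a single family of ideal triangulations of the exteriors $M_n = S^3 \setminus J_n$ that is uniform in $n$, and then to read the slopes off by Kabaya's method. The first step is the triangulation. Because the knots $J_n$ differ only in the number $n$ of half twists in one twist region, I would triangulate the part of $M_n$ coming from the remainder of the diagram once and for all (a fixed template), and encode the twist region as a layered solid torus in the sense of Jaco--Rubinstein, in the normalized form of Howie et al. The $n$ half twists then correspond to a uniform stack of tetrahedra, so that the combinatorics of the triangulation, and in particular its edge (gluing) equations, depend on $n$ through a simple recursion.

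With the triangulation fixed, I would form the deformation variety $D(M_n)$ defined by Thurston's gluing equations in the tetrahedron shape parameters and apply Kabaya's method: at an ideal point of a curve in $D(M_n)$ the shapes degenerate to $0$, $1$, or $\infty$, the degeneration pattern determines a spun-normal surface, and its boundary slope is computed by a combinatorial formula from the gluing data. The non-integrality of the slopes means that the relevant surfaces are not the two checkerboard surfaces but these spun surfaces detected at ideal points. The uniform layered-solid-torus structure of the twist region is exactly what makes the whole family accessible: the twist region contributes to the gluing equations via a recursion in $n$, so the relevant ideal points and their degeneration data can be described as explicit functions of $n$ rather than handled case by case.

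I would then identify the two ideal points (equivalently, two curves in $D(M_n)$) whose degenerations produce the desired surfaces and compute their slopes. The $n$ and $n+1$ appearing in the denominators should arise from the number of times the surface boundary wraps as it threads the layers of the twist region, while the linear-in-$n$ numerators $-(14n-2)$ and $-(10n+8)$ come from combining this twisting with the fixed framing contribution of the template region; matching to the stated values is then an algebraic simplification. Since each slope is produced by a genuine ideal point, it is automatically the boundary slope of an essential surface, so no separate incompressibility check is required.

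The main obstacle I anticipate is the uniform bookkeeping rather than any single hard idea: writing the gluing equations of the layered-solid-torus twist region and extracting the degeneration data at the two ideal points as closed-form functions of $n$, valid for every even $n \geq 2$ at once instead of for finitely many machine-computed examples. Establishing that the same two ideal points persist throughout the family, and that their recursive data simplify to the stated slopes, is where the real work lies; the slope formulas themselves follow once that recursion is pinned down.
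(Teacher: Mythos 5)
Your proposal follows essentially the same route as the paper: the twist region is encoded as a layered solid torus (Jaco--Rubinstein, Howie et al.) so that $J_{n+2}$'s triangulation comes from $J_n$'s by adding one tetrahedron, and the slopes are then obtained via Kabaya's method, with the persistence of the two ideal points across the family verified by an induction showing the degeneration vectors (hence all-same-sign entries, which is what guarantees the ideal points exist by Kabaya's Theorem 4.1) satisfy a simple closed-form recursion. The only caveat is that your write-up is a strategy rather than a completed argument --- the paper's actual content is exactly the bookkeeping you defer: explicit triangulations, the degeneration index lists $I_1$, $I_2$, the lemma that each added tetrahedron appends a fixed row and column $(0,\ldots,0,1,-2)$ to the degeneration matrices, and the two-step induction (with base cases $n=2,4,6,8$ done directly) that yields the valuations and the stated slopes.
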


As a corollary, we see that every integer is realized as a denominator.

\begin{cor}\label{cor:main}
\label{cor:non-integral}
For any positive integer $k$, there exists an alternating knot having a boundary slope with denominator $k$.
\end{cor}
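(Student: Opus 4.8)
The plan is to deduce the corollary directly from Theorems~\ref{thm:Kn} and~\ref{thm:Jn} by writing the displayed slopes as rational numbers in lowest terms and reading off their denominators. Since ``denominator $k$'' refers to the reduced denominator of a slope written as $p/q$ with $\gcd(p,q)=1$ and $q>0$, the entire argument is a short gcd computation layered on top of the two theorems. The cleanest route uses only the first slope of Theorem~\ref{thm:Jn}: I would show that for even $n\geq 2$ the slope $-\frac{14n-2}{n}$ reduces to a fraction whose denominator is exactly $n/2$, and then let $n$ run over all positive even integers so that $n/2$ runs over all positive integers.

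Concretely, given a positive integer $k$, I would set $n=2k$, which is even and at least $2$, so Theorem~\ref{thm:Jn} applies to $J_{2k}$. Substituting yields the boundary slope
\begin{equation*}
-\frac{14(2k)-2}{2k}=-\frac{28k-2}{2k}=-\frac{14k-1}{k}.
\end{equation*}
Because $14k-1\equiv -1 \pmod{k}$, we have $\gcd(14k-1,k)=1$, so this fraction is already in lowest terms and its denominator equals $k$. Hence $J_{2k}$ is an alternating knot with a boundary slope of denominator $k$, which is exactly the assertion of the corollary.

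For completeness I would record that the remaining slopes give alternative witnesses and arithmetic cross-checks, so that one may instead run a case split. For odd $n\geq 3$ the numerator of Theorem~\ref{thm:Kn} satisfies $2n^2-10n+2\equiv 2 \pmod{n}$, hence is coprime to the odd number $n$, so $K_n$ realizes every odd denominator $n\geq 3$; likewise the second slope $-\frac{10n+8}{n+1}$ of Theorem~\ref{thm:Jn} reduces to denominator $n+1$, since $10n+8\equiv -2\pmod{n+1}$ with $n+1$ odd. Combining $K_k$ for odd $k\geq 3$ with $J_n$ for the remaining values would present both families explicitly, but is strictly weaker than needed.

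I do not expect any substantial obstacle: all the topological content is already packaged in Theorems~\ref{thm:Kn} and~\ref{thm:Jn}, and the corollary is an elementary reduction. The only point needing genuine care is verifying that the denominator displayed after substitution is the \emph{reduced} one---i.e.\ that no further cancellation occurs---which is precisely what the congruence $14k-1\equiv -1\pmod{k}$ (and its analogues for the other slopes) provides.
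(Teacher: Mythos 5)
Your proposal is correct and takes essentially the same route as the paper: the authors state the corollary as an immediate consequence of Theorems~\ref{thm:Kn} and~\ref{thm:Jn}, leaving implicit exactly the cancellation you verify (they even flag ``possible cancelation with the numerator'' in the introduction), and your observation that $-\frac{14n-2}{n}$ reduces to denominator $n/2$ so that $J_{2k}$ realizes every denominator $k$ is the intended deduction. The only difference is cosmetic: you get all denominators from the single family $J_n$, whereas the paper's phrasing suggests also drawing odd denominators from $K_n$, which your final paragraph covers as a cross-check.
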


The two knots that start our sequences were already known to have non-integral boundary slopes.
The knot $K_3$ is $10_{79}$, which was shown to have non-integral boundary slopes $\pm \frac{10}{3}$ by Kabaya~\cite{Kabaya} 
and Dunfield and Garoufalidis~\cite{DG}, and $J_2 = 11a49$ has slope $ - \frac{28}{3}$, again by \cite{DG}.
In~\cite{DG}, the authors give many examples of alternating knots with 10 to 12 crossings having non-integral boundary slopes.
It seems likely that many further examples could be generated by adding twists
in a twist region of one of the knots listed by Dunfield and Garoufalidis. For example, although we do not prove it here, for $n$ odd, 
$-\frac{14n+12}{n+1}$ and $ -\frac{18n-2}{n}$ are boundary slopes of $J_n$. Indeed, in \cite{DG}, the authors show that $J_3 = 12a344$ has boundary slopes
$-\frac{52}{3}$ and $-\frac{27}{2}$.

The knots in the $K_n$ family are three braids. We remark that, in unpublished work, Matsuda~\cite{Matsuda} showed that 
every rational number occurs as the boundary slope of a three-braid knot.

Our method is inspired by the layered solid torus technique for calculating $A$-polynomials 
described by Howie et al.\ in \cite{HMP, HMPT}. 
We construct explicit triangulations of the complements of
$K_n$ and $J_n$ so that adding a tetrahedron to $K_n$'s triangulation gives that of $K_{n+2}$
and similarly for the $J_n$ sequence.

This paper presents further evidence that 
boundary slopes of alternating knots behave in unexpected ways. Earlier we
\cite{IMNS} showed that the diameter of the set of boundary slopes can become arbitrarily 
large compared to the crossing number. 
Here, we have realized every denominator, but the growth of denominators found in this paper is more controlled. 
The denominators $n$ and $n+1$ (ignoring possible cancelation with the numerator) 
that appear in Theorems~\ref{thm:Kn} and \ref{thm:Jn} are linear in the crossing number:
$\mbox{cr}(K_n) = n+7$ and $\mbox{cr}(J_n) = n+9$. There is also a linear relationship with
the number of tetrahedra in the triangulations used in our argument: $\frac{n+25}{2}$ for $K_n$
and $\frac{n+34}{2}$ for $J_n$.

We close this section with the following problem.

\begin{problem}
Given a rational number $\frac{\,p\,}{q}$,
is there an alternating knot with a boundary slope $\frac{\,p\,}{q}$?
\end{problem}

In the next section we review Kabaya's method for calculating boundary slopes. In Sections 3 and 4,
respectively, we prove Theorems~\ref{thm:Kn} and \ref{thm:Jn}.

\section{Kabaya's method
\label{sec:Kab}%
}

In this section, we recall Kabaya's method \cite{Kabaya-JKTR}.
He used this method to obtain ideal points of the complement of a pretzel knot in \cite{AGT}.

Let $N$ be an orientable complete hyperbolic 3-manifold of finite volume with cusps.
We consider an ideal triangulation of $N$ with $n$ ideal tetrahedra with corresponding complex parameters $z_\nu (1\le \nu \le n)$.
Let $z'_\nu=\frac{1}{1-z_\nu}$ and $z''_\nu=1-\frac{1}{z_\nu}$.
Each edge of an ideal tetrahedron has the complex parameter $z_\nu, z'_\nu$ or $z''_\nu$.
The opposite edge of the tetrahedron has the same parameter.
We put $w_\nu=1-z_\nu$.
Then we have $z'_\nu=\frac{1}{w_\nu}$ and $z''_\nu=-\frac{w_\nu}{z_\nu}$.

Let $e_i$ be the $i$th edge of the ideal triangulation of $N$.
Let $p_{i,\nu}$ (resp. $p'_{i,\nu}, p''_{i,\nu}$) be the number of edges of the $\nu$th ideal tetrahedra attached to $e_i$ whose parameter is $z_\nu$ (resp. $z'_\nu, z''_\nu$).
For an edge $e_i$,
we define 
$g_i\ (1\le i\le n)$
 as follows.
\[
g_i=\prod_{\nu=1}^n (z_\nu)^{p_{i,\nu}}(z'_\nu)^{p'_{i,\nu}}(z''_\nu)^{p''_{i,\nu}}
=\prod_{\nu=1}^n (-1)^{p''_{i,\nu}}(z_\nu)^{p_{i,\nu}-p''_{i,\nu}}(w_\nu)^{p''_{i,\nu}-p'_{i,\nu}}
=\prod_{\nu=1}^n (-1)^{p''_{i,\nu}}(z_\nu)^{r'_{i,\nu}}(w_\nu)^{r''_{i,\nu}},
\]
where $r'_{i,\nu}=p_{i,\nu}-p''_{i,\nu}$ and $r''_{i,\nu}=p''_{i,\nu}-p'_{i,\nu}$.
The equation 
$g_i=1$
 is called 
a {\em gluing equation}.
If $g_i=1$
 for any $i$, then the triangulation gives a representation of $\pi_1(N)$ to $\mathrm{PSL}(2,\mathbb C)$.
As $g_1\cdots g_n=1$, 
we can remove one of the equations.
For example, consider the first $n-1$ equations 
$g_1,\ldots,g_{n-1}$.
Let $r_i=(r'_{i,1},  r''_{i,1}, \ldots, r'_{i,n}, r''_{i,n})$.
The {\em matrix $R$} is defined by
\[
R=
\begin{pmatrix}
r_1\\
\vdots\\
r_{n-1}
\end{pmatrix}
=
\begin{pmatrix}
r'_{1,1} & r''_{1,1} & \cdots & r'_{1,n} & r''_{1,n}\\
\vdots & \vdots & & \vdots & \vdots\\
 r'_{n-1,1} & r''_{n-1,1} & \cdots & r'_{n-1,n} & r''_{n-1,n}\\
\end{pmatrix}
\]

Let $\frak{m}$ and $\frak{l}$ be simple closed curves on a component $T$ of $\partial N$ generating $H_1(T, \mathbb Z)$.
We assume that $\frak{m}$ and $\frak{l}$ do not meet the vertices of the triangulation of $T$.
When  $\frak{m}$ goes through a 2-simplex, it meets two edges of the simplex.
We pick up the complex parameter that corresponds to the vertex joining the
two edges of the simplex.
Let 
 $\mu$
 be the product
of these parameters.
We define 
$\lambda$
 for $\frak{l}$ in the same way.
Then we have
\[
\mu
=\pm \prod_{\nu=1}^n (z_\nu)^{m'_\nu}(w_\nu)^{m''_\nu},\ 
\lambda
=\pm \prod_{\nu=1}^n (z_\nu)^{l'_\nu}(w_\nu)^{l''_\nu},
\]

We also use $\frak{m}$ and $\frak{l}$ to denote the vectors $\frak{m}=(m'_1,m''_1,\ldots,m'_n,m''_n)$ and $\frak{l}=(l'_1,l''_1,\ldots,l'_n,l''_n)$.

Let $\mathcal D$ be the affine algebraic set defined by the set of gluing equations.
If points on $\mathcal D$ approach an ideal point,
then $z_\nu$ goes to $0, 1$ or $\infty$.
Let $I=(i_1,\ldots,i_n)$ be a vector where $i_\nu=0, 1$ or $\infty$.
We define $r(I)_{j,k}$ and the {\em degeneration matrix} $R(I)$ as follows.
\[
r(I)_{j,k}
=
\begin{cases}
r''_{j,k}\quad \text{if}\ i_k=1,\\
r'_{j,k}\quad \text{if}\ i_k=0,\\
-r'_{j,k}-r''_{j,k}\quad \text{if}\ i_k=\infty\\
\end{cases}
\]
\[
R(I)=
\begin{pmatrix}
r(I)_{1,1} & \cdots & r(I)_{1,k} & \cdots & r(I)_{1,n}\\
\vdots & & \vdots & & \vdots\\
r(I)_{n-1,1} & \cdots & r(I)_{n-1,k} & \cdots & r(I)_{n-1,n}
\end{pmatrix}
\]

The square matrix $R(I)_k$ is obtained by removing the $k$th column of $R(I)$.
\[
R(I)_k=
\begin{pmatrix}
r(I)_{1,1} & \cdots & \widehat{r(I)_{1,k}} & \cdots & r(I)_{1,n}\\
\vdots & & \vdots & & \vdots\\
r(I)_{n-1,1} & \cdots & \widehat{r(I)_{n-1,k}} & \cdots & r(I)_{n-1,n}
\end{pmatrix}
\]
Here the hat means to delete the entry.
Then we define the {\em degeneration vector} of $I$ by using the determinant of $R(I)_k$'s.
\[
d(I)=(d_1,\ldots,d_n)=
(\det R(I)_1,-\det R(I)_2,\ldots,(-1)^{n-1}\det R(I)_{n})
\]

Let $d'_i=d_i/c$, where $c=\gcd(d_1,\ldots,d_n)$.
In \cite[Theorem 4.1]{Kabaya-JKTR}, it is shown that if all the entries of $d(I)$ are positive (or negative), then there is a corresponding ideal point of $\mathcal D$ and the number of such ideal points is $c$.

Let $v$ be the valuation associated with an ideal point of $\mathcal D$ with the degeneration vector $d=(d_1,\ldots,d_n)$.
Let $\rho_1=(1,0), \rho_0=(0,-1)$ and $\rho_\infty=(-1,1)$.
At an ideal point $(-\log|w_\nu|,\log|z_\nu|)$ diverges to one of these directions.
The valuation of 
$\mu$ and $\lambda$
 can be calculated as follows (Remark 4.4 in \cite{Kabaya-JKTR}).
\[
v(\mu)
=\sum_{\nu=1}^n (m'_\nu v(z_\nu)+m''_\nu v(w_\nu))
=m\wedge (|d'_1|\rho_{i_1}, \ldots,|d'_n|\rho_{i_n}),
\]
\[
v(\lambda)
=\sum_{\nu=1}^n (l'_\nu v(z_\nu)+l''_\nu v(w_\nu))
=l\wedge (|d'_1|\rho_{i_1}, \ldots,|d'_n|\rho_{i_n}),
\]
where
$m=(m'_1,m''_1,\ldots,m'_n,m''_n)$,
$l=(l'_1,l''_1,\ldots,l'_n,l''_n)$, and
 $x\wedge y=\sum_{k=1}^n x'_k y''_k-x''_k y'_k$ for
$x=(x'_1,x''_1,\ldots,x'_n,x''_n)$ and $y=(y'_1,y''_1,\ldots,y'_n,y''_n)$.
If 
$v(\mu)$
 or 
 $v(\lambda)$
 is not zero, then 
 $-\frac{v(\lambda)}{v(\mu)}$ 
 is the boundary slope of an essential surface associated with the ideal point (Theorem 4.6 in \cite{Kabaya-JKTR}).

\section{Proof of Theorem~\ref{thm:Kn}}

In this section, we prove Theorem~\ref{thm:Kn}. We also show that $0$ and $6$ are 
boundary slopes of $K_n$.

We begin by describing a triangulation of the complement of $K_5$ and use 
the method of Kabaya~\cite{Kabaya-JKTR} summarized in Section~\ref{sec:Kab} above
to calculate the boundary slopes.
Next, we deploy the layered solid torus construction due to Jaco and Rubinstein~\cite{JR},
and further developed in the papers of Howie et al.~\cite{HMP, HMPT}. Given a triangulation
of $K_n$, this construction supplies one for $K_{n+2}$. Using induction, we determine 
the triangulation for each $K_n$ ($n \geq 5$, odd).

Kabaya~\cite{Kabaya} himself 
showed that $0$, $6$, and $\pm \frac{10}{3}$ 
are slopes for $K_3$ in 2008. 
Our induction begins with $K_5=K12a815$, which we consider in 
some detail.

\subsection{Triangulation of $K_5$}

In this subsection we provide a triangulation for the complement of $K_5$ and apply Kabaya's method to 
deduce its boundary slopes. 

\begin{figure}
\includegraphics[width=0.44\textwidth]{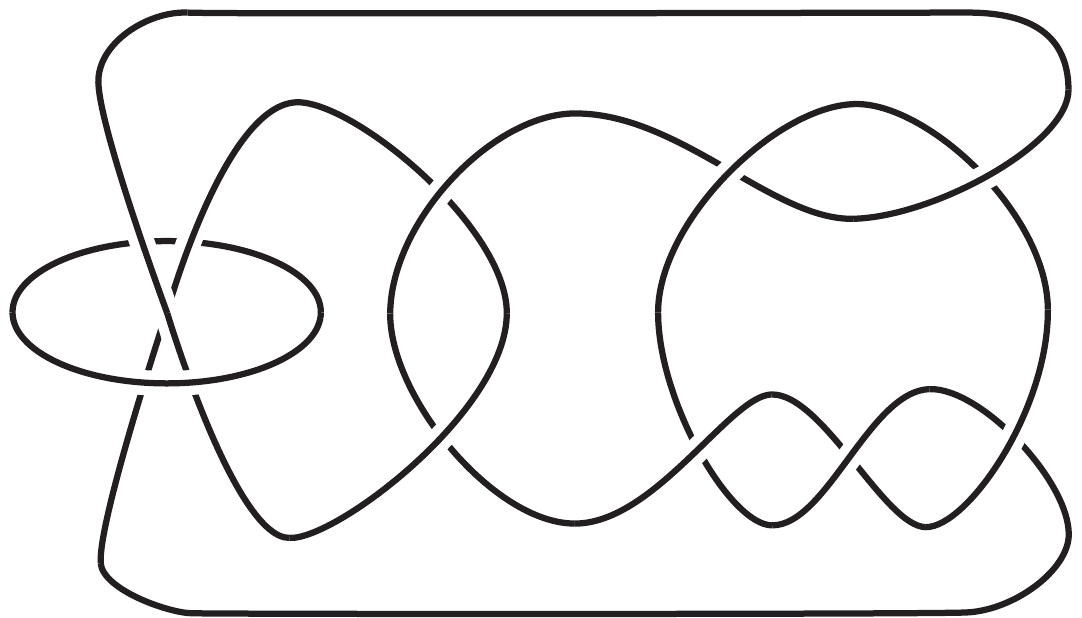}
\caption{
\label{fig:linkKn}
The link $L$. 
The alternating knot $K_n$ for $n \geq 1$ odd is obtained by 
$-\frac{1}{\frac{n+1}{2}}$
- Dehn filling of the trivial component of $L$.}
\end{figure}

\begin{table}
\centering
\begin{tabular}{c|c|c|c|c}
Tetrahedron & Face 012 & Face 013 & Face 023 & Face 123 \\ \hline 
0 & 7(320) & 12(132) & 7(102) & 3(302) \\
1 & 10(320) & 9(302) & 2(102) & 2(132) \\
2 & 1(203) & 6(120) & 4(203) & 1(132) \\
3 & 6(312) & 5(213) & 0(231) & 13(120) \\
4 & 8(102) & 5(012) & 2(203) & 14(132) \\
5 & 4(013) & 7(132) & 8(123) & 3(103) \\
6 & 2(301) & 15(213) & 8(032) & 3(120) \\
7 & 0(203) & 11(310) & 0(210) & 5(031) \\
8 & 4(102) & 10(012) & 6(032) & 5(023) \\
9 & 12(013) & 11(213) & 1(130) & 13(103) \\
10 & 8(013) & 11(012) & 1(210) & 12(120) \\
11 & 10(013) & 7(310) & 13(123) & 9(103) \\
12 & 10(312) & 9(012) & 13(320) & 0(031) \\
13 & 3(312) & 9(213) & 12(320) & 11(023) \\
14 & 15(013) & 15(023) & 15(021) & 4(132) \\
15 & 14(032) & 14(012) & 14(013) & 6(103)
\end{tabular}
\caption{A triangulation of $L$'s complement.}
\label{tab:LTri} 
\end{table}

\begin{figure}
\includegraphics[width=0.9\textwidth]{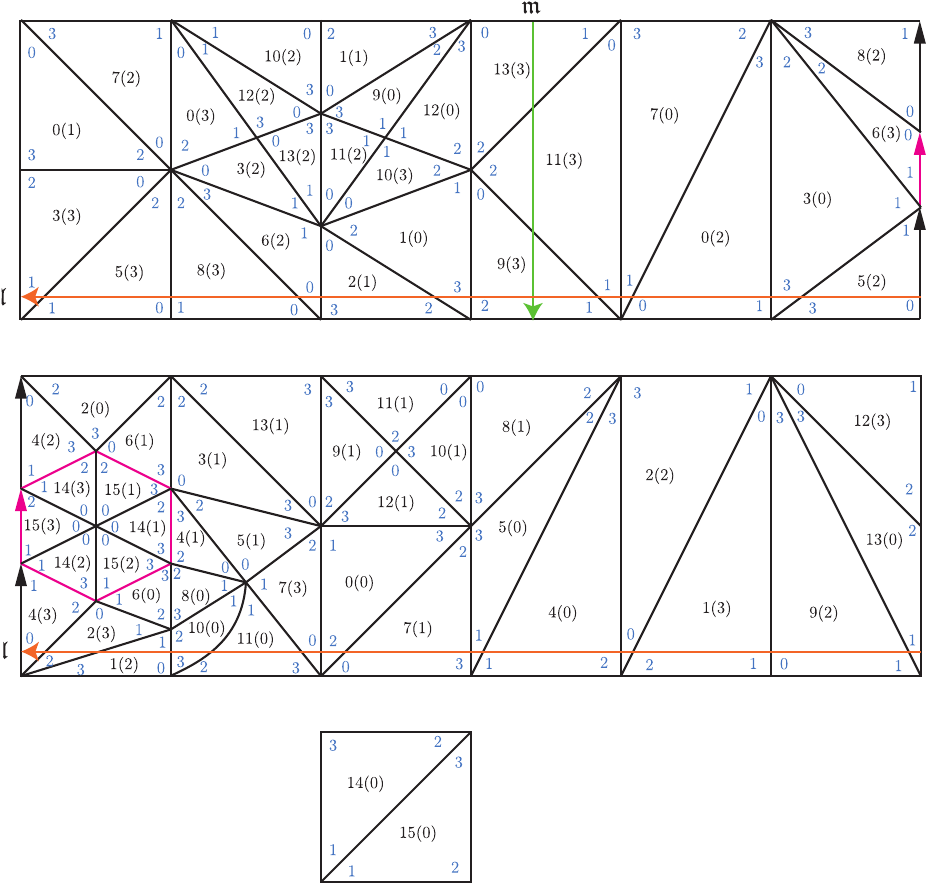}
\caption{A triangulation of the two cusp link complement with homology generators $\frak{l}$
and $\frak{m}$.
The right-hand side of the top figure is identified with the left-hand side of the middle figure.
The bottom is the triangulation of the cusp of the trivial component.}
\label{fig:LTri}
\end{figure}

The knot $K_5$ is the starting point of an induction based on fillings of 
the link $L$ of Figure~\ref{fig:linkKn}. Using SnapPy~\cite{SnapPy} we verify that Table~\ref{tab:LTri} gives an ideal triangulation 
(in Regina~\cite{Regina} notation) for the
complement of the link $L$.
Figure~\ref{fig:LTri}
shows a triangulation of $L$'s complement with generators for the 1-dimensional homology of one boundary torus.

Due to its connection with $L$, the triangulation for $K_5$'s complement is the same except for 
tetrahedra 4, 6, and 14.
%(Also, we must remove tetrahedron 15 as there are only 14 tetrahedra for $K_5$.)
(Also, we must remove tetrahedron 15 as there are only 15 tetrahedra for $K_5$, with indices
$0,1,2, \ldots, 14$.)
See Table~\ref{tab:K5tri}.

\begin{table}
\centering
\begin{tabular}{c|c|c|c|c}
Tetrahedron & Face 012 & Face 013 & Face 023 & Face 123 \\ \hline 
4 & 8(102) & 5(012) & 2(203) & 14(103) \\
6 & 2(301) & 14(213) & 8(032) & 3(120) \\
14 & 14(320) & 4(213) & 14(210) & 6(103) 
\end{tabular}
\caption{Identifications for three tetrahedra in $K_5$'s complement}
\label{tab:K5tri} 
\end{table}

The connection is via the layer solid construction that we use in the next subsection (see \cite{HMP,HMPT}).
For now notice that tetrahedra 14 and 15 are part of the second cusp of $L$'s complement and must be modified for 
this reason. In the triangulation for $L$'s complement, 
tetrahedra 14 and 15 are adjacent to 4 and 6, and this is why they are also modified.

\begin{table}
\centering
\begin{tabular}{c|c|l}
Edge & Degree & Tetrahedra (vertices) \\ \hline
0 & 5  & 3 (01), 5 (12), 4 (13), 14 (13), 6 (13) \\
1 & 4 & 0 (13), 12 (23), 13 (02), 3 (23) \\
2 & 8 & 0 (12), 3 (03), 5 (23), 8 (23), 6 (23), 3 (02), 0 (23), 7 (02) \\
3 & 6 & 1 (01), 9 (03), 11 (23), 13 (23), 12 (02), 10(23) \\
4 & 5 & 1 (12), 2 (13), 6 (02), 8 (03), 10 (02) \\
5 & 7 & 1 (03), 2 (12), 1 (13), 9 (02), 12 (03), 13 (03), 9 (23) \\
6 & 4 & 9 (01), 11 (12), 10 (13), 12 (01) \\
7 & 7 & 0 (01), 12 (13), 9 (12), 13 (01), 3 (13), 5 (13), 7 (23) \\
8 & 7 & 1 (02), 10 (03), 11 (02), 13 (12), 3 (12), 6 (12), 2 (01) \\
9 & 7 & 0 (02), 7 (03), 11 (03), 13 (13), 9 (13), 11 (13), 7 (01) \\
10 & 6 & 2 (03), 4 (23), 14 (03), 14 (02), 14 (12), 6 (01) \\
11 & 6 & 4 (01), 5 (01), 7 (13), 11 (01), 10 (01), 8 (01) \\
12 & 6 & 0 (03), 7 (12), 5 (03), 8 (13), 10 (12), 12 (12) \\
13 & 7 & 1 (23), 2 (23), 4 (03), 5 (02), 8 (12), 4 (02), 2 (02) \\
14 & 5 & 4 (12), 14 (01), 14 (23), 6 (03), 8 (02) 
\end{tabular}
\caption{Edge equations for $K_5$}
\label{tab:K5IM} 
\end{table}

From the triangulation of $K_5$'s complement, we deduce the edge equations, which we present in 
Table~\ref{tab:K5IM} using the notation of Regina~\cite{Regina}. 
For example, the 
first row is indicating that the edges of five tetrahedra coincide:
edge $(01)$ of tetrahedron $3$, edge $(12)$ of tetrahedron $5$, 
edge $(13)$ of tetrahedron $4$, edge $(13)$ of tetrahedron $14$,
and edge $(13)$ of tetrahedron $6$.
In the notation of \cite{Kabaya-JKTR} this gives the equation
$$ z_3 w_4^{-1} z_5^{-1} w_5 w_6^{-1} w_{14}^{-1},$$
which becomes the first row of the matrix $R$. 
Note that for the $i$th tetrahedron the edges $(01)$ and $(23)$ correspond to $z$,
the edges $(02)$ and $(13)$ correspond to $w^{-1}$, 
and the edges $(03)$ and $(12)$ correspond to $z^{-1}w$.
To calculate the matrix $R$, shown in Figure~\ref{fig:RK5}, we have omitted the redundant equation for edge 10.

\begin{figure}
\includegraphics[scale=.65]{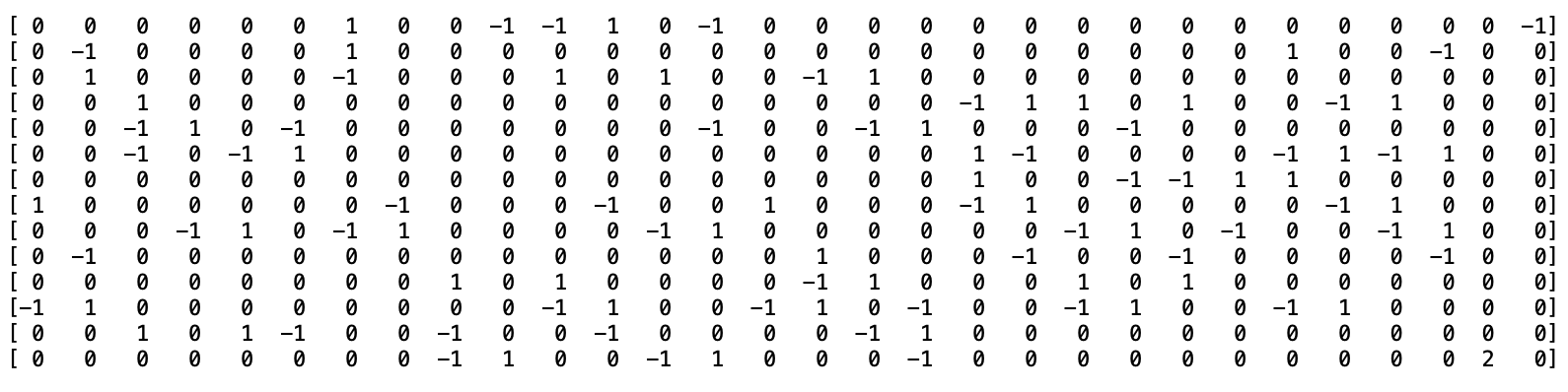}
\caption{The matrix $R$ for $K_5$.}
\label{fig:RK5}
\end{figure}

We can use SnapPy~\cite{SnapPy} to determine
the corresponding vectors for the meridian and longitude
$$\frak{m} = (0, 0, 0, 0, 0, 0, 0, 0, 0, 0, 0, 0, 0, 0, 0, 0, 0, 0, 0, -1, 0, 0, -1, 0, 0, 0, 0, -1, 0, 0)$$
and 
$$\frak{l}' = (-1, -1, 1, -1, 0, -1, -1, 0, 0, 0, 1, -1, 0, -1, 1, 0, -1, 0, 0, 3, -1, 1, 3, -1, 0, 0, 1, 4, 0, 0),$$
where we have replaced the $\frak{l}$ of 
Figure~\ref{fig:LTri} with $\frak{l}' = \frak{l}\frak{m}^{-4}$
to get a homologically trivial longitude.

By considering all $3^{15}$ possibilities, 
we found three lists of degeneration indices that result in 
a degeneration vector $d(I_i)$ whose entries are either 
all positive or all negative (and which, therefore, corresponds to 
a boundary slope).
\begin{align*}
I_1 & = (\infty, 0, 0, 0, \infty, 0, 1, 1, 0, 0, \infty, 0, \infty, \infty, \infty), \\
I_2 & = (\infty, \infty, 0, 0, \infty, 0, 1, \infty, \infty, 1, \infty, 1, \infty, \infty, \infty), \mbox{ and } \\
I_3 & = (\infty, \infty, 0, 0, \infty, 0, 1, \infty, \infty, \infty, \infty, 1, \infty, 0, \infty).
\end{align*}

We will see that the degeneration matrix for $I_1$ is
$$
R(I_1) = \left(
\begin{array}{ccccccccccccccc}
0 & 0 & 0 & 1 & 1 & -1 & -1 & 0 & 0 & 0 & 0 & 0 & 0 & 0 & 1 \\
1 & 0 & 0 & 1 & 0 & 0 & 0 & 0 & 0 & 0 & 0 & 0 & -1 & 1 & 0 \\
-1 & 0 & 0 & -1 & 0 & 1 & 0 & -1 & 1 & 0 & 0 & 0 & 0 & 0 & 0 \\
0 & 1 & 0 & 0 & 0 & 0 & 0 & 0 & 0 & -1 & -1 & 1 & 1 & -1 & 0 \\
0 & -1 & 0 & 0 & 0 & 0 & -1 & 0 & -1 & 0 & 1 & 0 & 0 & 0 & 0 \\
0 & -1 & -1 & 0 & 0 & 0 & 0 & 0 & 0 & 1 & 0 & 0 & 0 & 0 & 0 \\
0 & 0 & 0 & 0 & 0 & 0 & 0 & 0 & 0 & 1 & 1 & -1 & -1 & 0 & 0 \\
-1 & 0 & 0 & 0 & 0 & 0 & 0 & 0 & 0 & -1 & 0 & 0 & 1 & -1 & 0 \\
0 & 0 & 1 & -1 & 0 & 0 & 1 & 0 & 0 & 0 & 0 & 0 & 0 & 0 & 0 \\
1 & 0 & 0 & 0 & 0 & 0 & 0 & 1 & 0 & 0 & 0 & -1 & 0 & 1 & 0 \\
0 & 0 & 0 & 0 & -1 & 1 & 0 & -1 & 1 & 0 & -1 & 1 & 0 & 0 & 0 \\
0 & 0 & 0 & 0 & 0 & -1 & 0 & 1 & 0 & 0 & 0 & 0 & 0 & 0 & 0 \\
0 & 1 & 1 & 0 & 1 & 0 & 0 & 0 & -1 & 0 & 0 & 0 & 0 & 0 & 0 \\
0 & 0 & 0 & 0 & 0 & 0 & 1 & 0 & 0 & 0 & 0 & 0 & 0 & 0 & -2
\end{array}
\right). $$

Indeed, the $0$ entries at positions
2, 3, 4, 6, 9, 10, and 12  in $I_1$ indicate that 
for $i \in \{2, 3, 4, 6, 9, 10, 12\}$ the $i$th column of $R(I_1)$ is the $2i-1$ column 
of the matrix $R$, the column that corresponds to the exponent of $z_i$ in the
gluing equations. 
Similarly, the $1$'s at positions 7 and 8 mean that the $i$th 
column ($i = 7,8$) of $R(I_1)$ is the $2i$ column (the $w_i$ exponents) of the matrix $R$. 
For the remaining $i \in \{1, 5, 11, 13, 14, 15\}$ with entry $\infty$, the $i$th column 
of $R(I_1)$ is the negative of the sum of the $2i-1$ and $2i$ columns of $R$. 

This results in a degeneration vector
$$d(I_1) = (1, 4, 1, 5, 1, 4, 4, 4, 6, 5, 14, 9, 10, 4, 2),$$
which is an alternating sign listing of the minors of $R(I_1)$ formed by deleting the columns
one by one from left to right. Since all entries are positive, this corresponds to a boundary slope.

As described in Section~\ref{sec:Kab} above,
this gives valuations 
$(v(\lambda), v(\mu))
 = (-2,5)$ 
for the meridian and longitude which shows that $\frac{2}{5}$ is a boundary slope of $K_5$.
For $I_2$ we find the degeneration vector is $$(1, 2, 1, 1, 1, 3, 1, 2, 3, 1, 1, 3, 4, 2, 2)$$ with valuations 
$(v(\lambda), v(\mu))
 = (6,-1)$ 
and 
a boundary slope of 6. And $I_3$ gives vector
$$(-1, -2, -1, -1, -1, -3, -1, -2, -3, -1, -1, -2, -2, -1, -2),$$ and 
$(v(\lambda), v(\mu))
= (0,1)$ 
for a boundary slope of $0$.
In summary, we have verified that $0$, $6$, and $\frac{2}{5}$ are boundary slopes of $K_5$.

\begin{figure}
\includegraphics[width=0.9\textwidth]{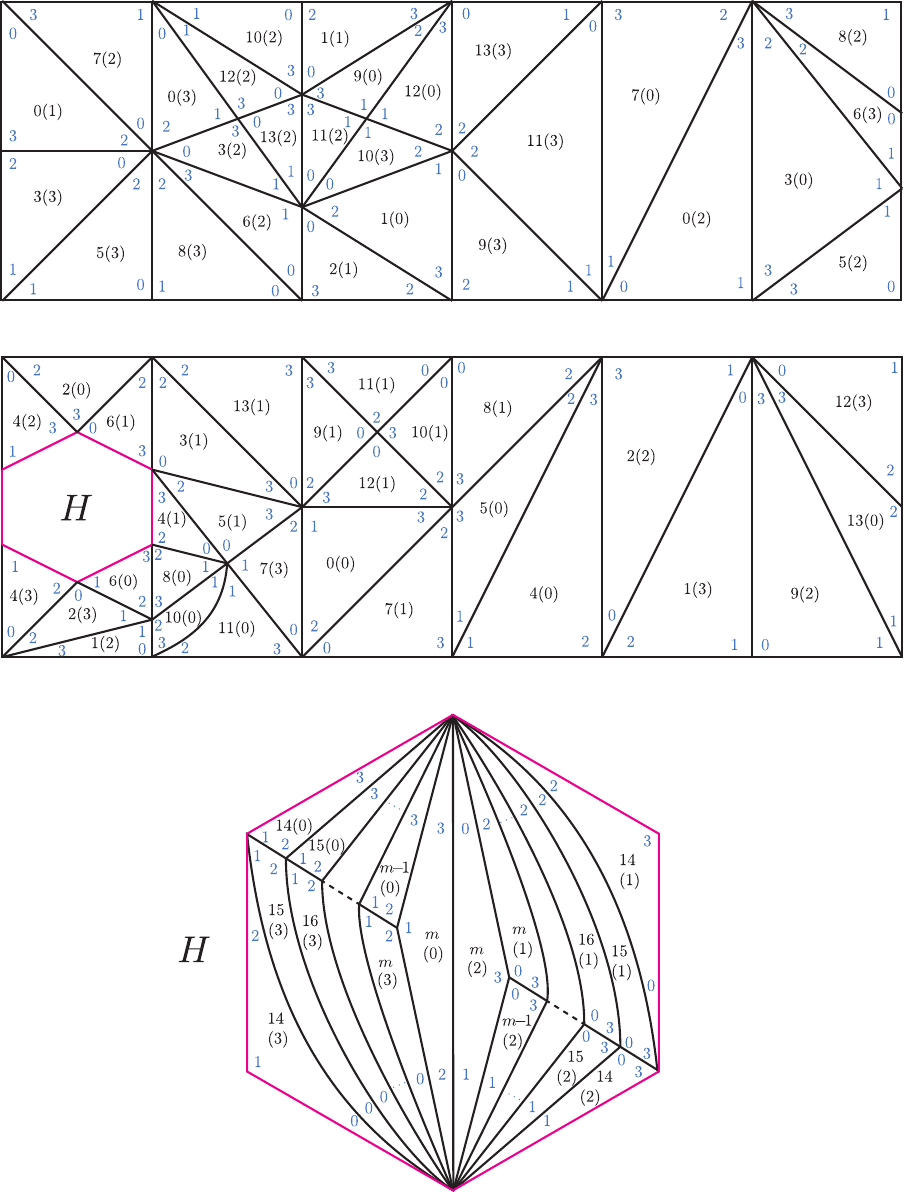}
\caption{Filling the cusp with a layered solid torus gives a triangulation of $K_{n+2}$. 
Here $m=(n+25)/2$.}
\label{fig:Knfill}
\end{figure}

\begin{table}
\centering
\begin{tabular}{c|c|c|c|c}
Tetrahedron & Face 012 & Face 013 & Face 023 & Face 123 \\ \hline 
0 & 7(320) & 12(132) & 7(102) & 3(302) \\
1 & 10(320) & 9(302) & 2(102) & 2(132) \\
2 & 1(203) & 6(120) & 4(203) & 1(132) \\
3 & 6(312) & 5(213) & 0(231) & 13(120) \\
4 & 8(102) & 5(012) & 2(203) & 14(103) \\
5 & 4(013) & 7(132) & 8(123) & 3(103) \\
6 & 2(301) & 14(213) & 8(032) & 3(120) \\
7 & 0(203) & 11(310) & 0(210) & 5(031) \\
8 & 4(102) & 10(012) & 6(032) & 5(023) \\
9 & 12(013) & 11(213) & 1(130) & 13(103) \\
10 & 8(013) & 11(012) & 1(210) & 12(120) \\
11 & 10(013) & 7(310) & 13(123) & 9(103) \\
12 & 10(312) & 9(012) & 13(320) & 0(031) \\
13 & 3(312) & 9(213) & 12(320) & 11(023) \\
14 & 15(312) & 4(213) & 15(013) & 6(103) \\
15 & 16(312) & 14(023) & 16(013) & 14(120) \\
16 & 17(312) & 15(023) & 17(013) & 15(120) \\
17 & 17(320) & 16(023) & 17(210) & 16(120) \\
\end{tabular}
\caption{A triangulation of $K_{11}$'s complement.}
\label{tab:RegK11} 
\end{table}

\begin{table}
\centering
\begin{tabular}{c|c|c|c|c}
Tetrahedron & Face 012 & Face 013 & Face 023 & Face 123 \\ \hline 
17 & 18(312) & 16(023) & 18(013) & 16(120) \\
18 & 18(320) & 17(023) & 18(210) & 17(120) \\
\end{tabular}
\caption{Identifications for two tetrahedra in $K_{13}$'s complement.}
\label{tab:RegK13} 
\end{table}

\subsection{Triangulation of $K_n$}
In this subsection, we use the layered solid torus construction to move from a triangulation
of $K_n$'s complement to one for $K_{n+2}$.
We closely follow the papers of Howie et al.~\cite{HMP, HMPT} to which we refer
the reader for additional details.

Filling the cusp of $L$ with a solid layered torus as in Figure~\ref{fig:Knfill} gives a triangulation
of the knot complement $K_n$. In particular, we see that the complement of $K_{n+2}$ is formed from
that of $K_n$ by adding a single tetrahedron. Tables~\ref{tab:RegK11} and \ref{tab:RegK13} 
illustrate how the triangulation changes with this addition: the triangulations for $K_{11}$ and $K_{13}$
are the same except for the last  two rows of the tables. 
Let's see how adding a tetrahedron as in Tables~\ref{tab:RegK11} and \ref{tab:RegK13} will change the 
boundary slope calculations. 

Given a triangulation of $K_n$ (for $n \geq 5$) we will add a tetrahedron to obtain a triangulation of $K_{n+2}$. As in Tables~\ref{tab:RegK11}
and \ref{tab:RegK13}, let $m = (n+25)/2$ denote the added tetrahedron, glued so that its 012 face matches its 320 face, its 013 face matches
the 023 face of tetrahedra $m-1$, and its 123 face is attached to the 120 face of $m-1$. This change means we will need to modify two edge equations
and add an entirely new one. 

\begin{table}
\centering
\begin{tabular}{c|c|l}
Edge & Degree & Tetrahedra (vertices) \\ \hline
0 & 5  & 3 (01), 5 (12), 4 (13), 14 (13), 6 (13) \\
1 & 4 & 0 (13), 12 (23), 13 (02), 3 (23) \\
2 & 8 & 0 (12), 3 (03), 5 (23), 8 (23), 6 (23), 3 (02), 0 (23), 7 (02) \\
3 & 6 & 1 (01), 9 (03), 11 (23), 13 (23), 12 (02), 10(23) \\
4 & 5 & 1 (12), 2 (13), 6 (02), 8 (03), 10 (02) \\
5 & 7 & 1 (03), 2 (12), 1 (13), 9 (02), 12 (03), 13 (03), 9 (23) \\
6 & 4 & 9 (01), 11 (12), 10 (13), 12 (01) \\
7 & 7 & 0 (01), 12 (13), 9 (12), 13 (01), 3 (13), 5 (13), 7 (23) \\
8 & 7 & 1 (02), 10 (03), 11 (02), 13 (12), 3 (12), 6 (12), 2 (01) \\
9 & 7 & 0 (02), 7 (03), 11 (03), 13 (13), 9 (13), 11 (13), 7 (01) \\
10 & $n+3$ & 2 (03), 4 (23), 6(01), 14 (03), 14 (12), 15 (03), 15 (12), 16 (03), 16 (12), \ldots \\
 &  & \ldots, $(m-1)$ (03), $(m-1)$ (12), $m$ (02), $m$ (03), $m$ (12) \\
11 & 6 & 4 (01), 5 (01), 7 (13), 11 (01), 10 (01), 8 (01) \\
12 & 6 & 0 (03), 7 (12), 5 (03), 8 (13), 10 (12), 12 (12) \\
13 & 7 & 1 (23), 2 (23), 4 (03), 5 (02), 8 (12), 4 (02), 2 (02) \\
14 & 6 & 4 (12), 14 (01), 14 (23), 6 (03), 8 (02), 15 (13) \\
15 & 4 & 14 (02), 15 (23), 16 (13), 15 (01) \\
16 & 4 & 15 (02), 16 (23), 17 (13), 16 (01) \\
17 & 4 & 16 (02), 17 (23), 18 (13), 17 (01) \\
\vdots & \vdots & \vdots  \\
 $m-1$ & 4 & $(m-2)$ (02), $(m-1)$ (23), $m$ (13), $(m-1)$ (01) \\
$m$ & 3 & $(m-1)$ (02), $m$ (23), $m$ (01) 
\end{tabular}
\caption{Edge equations for $K_{n+2}$ for $n \geq 5$ and odd. Here $m = (n+25)/2$.}
\label{tab:Knp2IM} 
\end{table}

Table~\ref{tab:Knp2IM} illustrates the result as we now explain. (cf.~Figure~\ref{fig:Knfill}, which illustrates the situation after adding tetrahedron $m$).
The last edge equation for $K_n$ (e.g., edge 14 in Table~\ref{tab:K5IM}) includes edge (23) of tetrahedron $m-1$. Since that edge will be identified with
edge (13) of tetrahedron $m$, we add $m(13)$ to the last edge equation of $K_n$. Edge Equation 10 will show the identification of 
$2 (03)$, $4 (23)$,  and $6 (10)$ with edges in tetrahedra 14, 15, \ldots, $m-1$. (In Figure~\ref{fig:Knfill}, this is the edge of the top
and bottom vertex of hexagon $H$.) In fact, for all these but the last, $m-1$, the two edges
(03) and (12) of each tetrahedron will be identified with the others. For $m-1$, in addition, edge (02) is part of this equation. We remove $(m-1)$ (02)
from this equation and instead add $m$ (02), $m$ (03) and $m$ (12). 
%Due to the way we glued in the new tetrahedron, 
%$m$ (12) is identified with $(m-1)$ (12) and $m$ (03) with $(m-1)$ (03). 
Due to the way we glued in the new tetrahedron, 
$m$ (12) is identified with $(m-1)$ (12) (see bottom vertex of $H$ in Figure~\ref{fig:Knfill}) and $m$ (03) with $(m-1)$ (03) (top vertex of $H$).
Actually, the top and bottom vertices of $H$ are the same edge, so all four of these are identified as part of edge 10 in
Table~\ref{tab:Knp2IM}.  
Since tetrahedra $m$ is ``folded'' with its 012 and 320 face identified, 
edge (02) of $m$ also belongs with these other edges. On the other hand, we have ``unfolded'' $m-1$ to make way for $m$, so $(m-1)$ (02) is
no longer equivalent to those other edges. Instead, $(m-1)$ (02) is identified with $m$ (01) and $m$ (23) and we add a new edge equation to reflect this.

In summary, in passing from $K_n$ to $K_{n+2}$, we modify the last edge equation of $K_n$, Equation 10 of $K_n$, and add a new edge equation. 
Table~\ref{tab:Knp2IM} illustrates the resulting edge equations for $K_{n+2}$ ($n \geq 5$).

Since the meridian and longitude do not pass through the added tetrahedron
we append two zeros to $\frak{m}$ and $\frak{l}'$ for each tetrahedron added. On the other hand, 
as the linking number of the two components of $L$ is 2 (in absolute value),
to maintain a homologically trivial longitude, we subtract four meridia each time we add a tetrahedron. In other words the meridian and longitude for $K_{n+2}$
are
\begin{align}
\label{eqn:lm}%
\frak{m}&= (0, 0, 0, 0, 0, 0, 0, 0, 0, 0, 0, 0, 0, 0, 0, 0, 0, 0, 0, -1, 0, 0, -1, 0, 0, 0, 0, -1, 0, \ldots, 0) \mbox{ and}  \\
\frak{l}'& = (-1, -1, 1, -1, 0, -1, -1, 0, 0, 0, 1, -1, 0, -1, 1, 0, -1, 0, 0, 2n-3, -1, 1, 2n-3, -1, 0, 0, 1, 2n-2, 0, \ldots, 0) \nonumber
 \end{align}
each terminating in a sequence of $n-1$ 0's. 
As for the sets of degeneration indices, we append an $\infty$ for each tetrahedron added. With these considerations, the degeneration vectors 
follow a nice pattern as we now verify.

\begin{prop} 
\label{prop:I1degv}
Let $n \geq 5$ be odd. The $I_1$ degeneration vector for $K_n$ is
$$(-1)^{\frac{n-1}{2}} (1,n-1,1,n,1,n-1,n-1,n-1, n+1, n, 3n-1, 2n-1, 2n, n-1, n-3, \ldots, 4, 2).$$
\end{prop}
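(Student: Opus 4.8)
\section*{Proof proposal}

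The plan is to argue by induction on odd $n\ge 5$. The base case $n=5$ is exactly the explicit computation above: the formula evaluates to $(-1)^{2}(1,4,1,5,1,4,4,4,6,5,14,9,10,4,2)$, matching the computed $d(I_1)$ for $K_5$. For the inductive step I would pass from $K_n$ to $K_{n+2}$ by adding the single tetrahedron $m=N$, writing $N=(n+25)/2$ for the number of tetrahedra of $K_n$, and tracking the effect on the degeneration matrix. Throughout I would drop the edge-$10$ equation for \emph{every} $K_n$; since any one gluing equation is redundant, this is legitimate and, crucially, it makes the (complicated) modification of edge $10$ irrelevant. With this convention the only changes to the matrix are: (a) the last edge equation of $K_n$, edge $N-1$, acquires the single new entry $m(13)$; (b) the new edge $m$, namely $(m-1)(02),\,m(23),\,m(01)$, is appended as a new row; and (c) a new column for tetrahedron $m$ is appended.

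First I would record the resulting block structure. Using the conventions that $(01),(23)$ edges give $z$, that $(02),(13)$ give $w^{-1}$, that $(03),(12)$ give $z^{-1}w$, together with the $\infty$-rule $-r'-r''$ (all tetrahedra $14,\ldots,N$ carry index $\infty$ in $I_1$), one reads off that the degeneration matrix $A$ of $K_n$ sits inside that of $K_{n+2}$ as
\[
R(I_1)^{(n+2)}=\begin{pmatrix} A & b \\ c^{T} & -2 \end{pmatrix},
\]
where $b$ has a single $+1$ in the row of edge $N-1$ (from the $w^{-1}$-type edge $m(13)$), $c$ has a single $+1$ in the column of tetrahedron $N-1$ (from the $w^{-1}$-type edge $(m-1)(02)$), and the corner entry $-2$ comes from the two $z$-type edges $m(01),m(23)$.

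Next comes the normalization. Deleting the last column of $R(I_1)^{(n+2)}$ and expanding the resulting determinant along its bottom row $c^{T}$ (a single $+1$) collapses it to the minor of $A$ obtained by deleting $A$'s last column; hence the last entry of the degeneration vector satisfies $d^{(n+2)}_{N+1}=(-1)^{N}\det A_{\widehat N}=-d^{(n)}_{N}$. By the induction hypothesis $d^{(n)}_{N}=(-1)^{(n-1)/2}\cdot 2$, so $d^{(n+2)}_{N+1}=(-1)^{(n+1)/2}\cdot 2\neq 0$, which also shows that $R(I_1)^{(n+2)}$ has full rank and hence a one-dimensional kernel spanned by the true degeneration vector. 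It then suffices to check that the vector $\widetilde d^{(n+2)}$ predicted by the formula (with $n$ replaced by $n+2$) lies in this kernel, because matching the single nonzero entry at position $N+1$ forces the proportionality constant to be $1$.

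Verifying $R(I_1)^{(n+2)}\,\widetilde d^{(n+2)}=0$ row by row splits into three cases. The long ``chain'' edges $15,\ldots,m-1$ each have local row $+1,-2,+1$ on three consecutive index-$\infty$ tetrahedra, so the corresponding dot product is the second difference of the arithmetic tail $\ldots,6,4,2$ of $\widetilde d^{(n+2)}$ and vanishes identically; the new edge $m$ gives the trivial relation $\widetilde d^{(n+2)}_{N}=2\,\widetilde d^{(n+2)}_{N+1}$, that is $4=2\cdot 2$. The remaining ``head/transition'' edges ($0$--$9$, $11$--$14$) have structure independent of $n$ and meet the vector only in positions $1$ through $16$, whose entries are explicit linear functions of $n$; each such equation therefore reduces to a single linear identity in $n$ that I would verify one at a time. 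I expect this last, bookkeeping-heavy step to be the main obstacle: correctly converting each of the finitely many head/transition edge equations into a matrix row via the $z/w^{-1}/z^{-1}w$ and $\infty/0/1$ rules, and confirming that every resulting linear-in-$n$ expression cancels. The chain edges are dispatched uniformly by the second-difference observation, and the normalization is a one-line cofactor expansion, so the genuine work lies in getting the head correct.
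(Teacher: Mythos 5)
Your strategy is genuinely different from the paper's, and its skeleton is sound. Both arguments rest on the same structural fact (the paper's Lemma~\ref{lem:RI1}): with edge $10$ dropped throughout, $R(I_1)$ for $K_{n+2}$ is $R(I_1)$ for $K_n$ bordered by the row $(0,\dots,0,1,-2)$ at the bottom and the column $(0,\dots,0,1,-2)^{T}$ at the right; your derivation of this block structure, including the signs coming from the $z$/$w^{-1}$/$z^{-1}w$ conventions and the $\infty$-rule, agrees with the paper. The two proofs then diverge. The paper expands \emph{every} maximal minor of the matrix for $K_{n+4}$ along the new bottom row, obtaining the three-term recurrence $\mathbf{d}_{n+4} = -2(\mathbf{d}_{n+2},0) - (\mathbf{d}_{n},0,0) + (-1)^{\frac{n+3}{2}}(\mathbf{0},0,2)$, and closes a two-step induction from the base cases $\mathbf{d}_5$ and $\mathbf{d}_7$; this handles all coordinates at once and never requires knowing the rows of $R(I_1)$ coming from edges $0$--$9$, $11$--$14$. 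You instead compute only the last minor (your normalization $d^{(n+2)}_{N+1}=-d^{(n)}_{N}\neq 0$ is correct), conclude full row rank and a one-dimensional kernel, and reduce the proposition to showing that the conjectured vector is annihilated by every row; your treatment of the chain rows (second differences of an arithmetic tail) and of the new bottom row is correct. This buys a one-step induction from the single base case $n=5$, at the price of having to treat all the head rows explicitly.

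That price is exactly where your proposal stops being a proof: the fourteen head-row identities, which in your approach carry essentially all of the computational content of the proposition, are deferred (``I would verify one at a time'') rather than carried out. They do hold, so nothing will fail, but as written the argument is incomplete at its core. Two remarks on closing it. First, there is a cheap finish within your framework: for every $n\geq 5$ the head rows of $R(I_1)$ for $K_{n+2}$ form one fixed list of integer vectors supported in positions $1$--$16$, while the entries of the conjectured vector in those positions are affine functions of $n$; hence each head-row pairing is affine in $n$ and vanishes identically once it vanishes at two parameter values, which you can extract from direct computations of $\mathbf{d}_7$ and $\mathbf{d}_9$ (each is automatically a kernel vector of its own matrix, by the same Laplace fact). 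This makes your ``main obstacle'' essentially free, at the cost of one more base-case computation --- comparable in total effort to the paper's route. Second, you should state and justify the fact your whole argument hinges on, namely that the signed-maximal-minor vector $d(I)$ lies in $\ker R(I)$ (stack a row of $R(I)$ on top of $R(I)$ and expand the resulting singular square matrix along that row); it is standard, but it is not quoted in the paper's setup and cannot be left implicit.
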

 
\begin{proof} [Proof of Theorem~\ref{thm:Kn}]
Kabaya~\cite{Kabaya} proved that $\pm 10/3$ are boundary slopes of $K_3 = 10_{79}$ and
this was also shown by Dunfield and Garoufalidis~\cite{DG}.
Assume $n \geq 5$. Using the degeneration vector of 
Proposition~\ref{prop:I1degv}
and the meridian and longitude of Equation~\ref{eqn:lm},
we deduce the following valuations and boundary slope
by applying Theorems 4.1 and 4.6 in \cite{Kabaya-JKTR}: 
$(v(\lambda), v(\mu))
 = (-1)^{\frac{n-1}{2}}(-(2n^2-10n+2),n)$ with boundary slope $\frac{2n^2-10n+2}{n}$.
\end{proof}

\begin{rem} 
\label{rem:I2degv}
Let $n \geq 5$ be odd. The $I_2$ degeneration vector for $K_n$ is
$$(-1)^{\frac{n-1}{2}} (1,2,1,1,1,n-2,1,2, n-2, 1, 1, 3, 4, 2, n-3, n-5, \ldots, 2),$$
and that for $I_3$ is
$$(-1)^{\frac{n+1}{2}} (1,2,1,1,1,n-2,1,2, n-2, 1, 1, 2, 2, 1, n-3, n-5, \ldots, 2).$$
For $I_2$: 
$(v(\lambda), v(\mu))
 = (-1)^{\frac{n+1}{2}}(-6,1)$ with boundary slope 6; and 
for $I_3$: 
$(v(\lambda), v(\mu))
 = (-1)^{\frac{n-1}{2}}(0,1)$ with boundary slope 0.
\end{rem}

It remains to prove Proposition~\ref{prop:I1degv}. 
We begin with a lemma.

\begin{lem}
\label{lem:RI1}%
Let $n \geq 5$ be odd.
The $R(I_1)$ matrix of $K_{n+2}$ is formed from that for $K_n$ by adding the row
$(0, 0, 0, \ldots 0, 1, -2)$ at bottom and column $(0,0,0, \ldots, 0,1,-2)^T$ at right.
\end{lem}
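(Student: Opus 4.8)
The strategy is to track exactly how the three edge-equation changes described in the preceding discussion (modify Equation 10, modify the last edge equation, add one new edge equation) translate into changes in the $R(I_1)$ matrix. Recall that $R(I_1)$ is obtained from the full edge-equation data by selecting, for each tetrahedron $k$, a single column determined by the entry $i_k$ of $I_1$: the $z_k$-column when $i_k=0$, the $w_k$-column when $i_k=1$, and the negated sum of both when $i_k=\infty$. Since we append an $\infty$ to the degeneration index for each tetrahedron added, the new tetrahedron $m=(n+25)/2$ contributes, in the $I_1$ selection, the column $-(\text{$z_m$-column}+\text{$w_m$-column})$. The plan is therefore to compute the contribution of tetrahedron $m$ to each of the edge equations it touches, apply the $\infty$-rule, and verify that the net effect is precisely the claimed new row and column.

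First I would read off from Table~\ref{tab:Knp2IM} which edges involve tetrahedron $m$. The new tetrahedron $m$ appears in edge equation $10$ (through $m(02)$, $m(03)$, $m(12)$), in edge equation $m-1$ (through $m(13)$), and in the brand-new edge equation $m$ (through $m(01)$ and $m(23)$, together with $(m-1)(02)$). Using the convention stated in the excerpt---that edges $(01),(23)$ contribute $z$, edges $(02),(13)$ contribute $w^{-1}$, and edges $(03),(12)$ contribute $z^{-1}w$---I would compute the $(z_m, w_m)$ exponent pair contributed to each of these three edge equations. The new edge equation $m$ has $m(01)$ and $m(23)$ (two $z$'s) plus $(m-1)(02)$; its $z_m$ exponent is $2$ and its $w_m$ exponent is $0$. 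Applying the $\infty$-rule gives a column entry of $-(2+0)=-2$ in that new row, and an entry coming from the $(m-1)(02)$ term in the $w_{m-1}$ position, which accounts for the $1$ just before the $-2$ in the claimed new row $(0,\dots,0,1,-2)$.

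Next I would verify the new rightmost column. The new tetrahedron's index $m$ appears in the old "last edge equation" (which becomes edge $m-1$ in the new labeling) only through $m(13)$, contributing a $w_m^{-1}$, i.e.\ exponent pair $(0,-1)$; under the $\infty$-rule this yields $-(0+(-1))=+1$, giving the $1$ entry in the new column at the row for edge $m-1$. In edge equation $10$, the terms $m(02), m(03), m(12)$ contribute $w_m^{-1}\cdot z_m^{-1}w_m\cdot z_m^{-1}w_m$, with exponent pair $(-2,1)$; the $\infty$-rule gives $-(-2+1)=1$. I would need to check that this value is already consistent with how edge $10$'s row is being updated and, crucially, that after the modifications all other entries in the new column vanish because no other edge equation involves tetrahedron $m$.

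The main obstacle I anticipate is bookkeeping the simultaneous modification of the row and column indexing: removing one entry (the old redundant edge) and ensuring the ``unfolding'' of tetrahedron $m-1$ is correctly reflected, so that the interaction between the new edge equation $m$, the modified edge $10$, and the modified edge $m-1$ does not perturb any previously computed entries of $R(I_1)$ for $K_n$. I would handle this by arguing locally: every edge equation other than $10$, $m-1$, and the new edge $m$ is literally unchanged from $K_n$ to $K_{n+2}$, so their rows (restricted to the old columns) are identical and their new-column entries are zero; for the three affected equations I would confirm that the old-column entries are preserved and only the new-column entry is added, matching the claimed $(0,0,\dots,0,1,-2)^T$. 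Once this column and the new bottom row $(0,\dots,0,1,-2)$ are verified, the lemma follows.
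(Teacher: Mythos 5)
Your overall strategy---tracking which edge equations gain or lose edges of the new tetrahedron $m$ and then applying the $\infty$-rule to the last two column pairs---is the same as the paper's, and your local computations are correct: the new edge equation contributes $w_{m-1}^{-1}z_m^{2}$, giving the bottom row $(0,\ldots,0,1,-2)$, and the term $m(13)$ added to the old last equation gives the entry $+1$ in the new column. But there is a genuine gap at exactly the point you defer as a ``consistency check'': edge equation 10. The lemma is only true because the matrix $R$ is defined with the redundant equation for edge 10 \emph{omitted} (the paper fixes this convention when building $R$ for $K_5$, and its proof of the lemma begins by invoking it). You instead treat edge 10 as one of ``the three affected equations'' whose row sits in the matrix, and you assert that for all three ``the old-column entries are preserved and only the new-column entry is added.'' Both halves of that assertion fail for edge 10. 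First, as your own computation shows, tetrahedron $m$ contributes $z_m^{-2}w_m$ to edge 10, so under the $\infty$-rule its row would acquire the entry $-(-2+1)=+1$ in the new column---incompatible with the claimed column $(0,\ldots,0,1,-2)^{T}$. Second, the old-column entries of edge 10 are \emph{not} preserved: $(m-1)(02)$ is removed from that equation, so in $K_n$ tetrahedron $m-1$ contributes $z_{m-1}^{-2}w_{m-1}$ ($\infty$-rule entry $+1$) while in $K_{n+2}$ it contributes $z_{m-1}^{-2}w_{m-1}^{2}$ ($\infty$-rule entry $0$). So the check you postpone would produce a contradiction, not a confirmation.

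The missing idea is the identification of the omitted redundant equation with precisely the equation that the layered-solid-torus move keeps rewriting. Once edge 10 is excluded from $R$ (for every $n$, consistently), the only changes in passing from $R_n$ to $R_{n+2}$ are a single entry $-1$ in the $w_m$-column of the old last row and a new bottom row with entries $-1$ and $2$ in the $w_{m-1}$- and $z_m$-columns; the lemma then follows from the $\infty$-rule computation you already carried out, since no surviving row other than these two involves tetrahedron $m$. Your phrase ``removing one entry (the old redundant edge)'' gestures at this, but it is not used: you must say which equation is omitted, note it is edge 10 in both $R_n$ and $R_{n+2}$, and discard that row entirely rather than attempt to verify that its entries are preserved (they are not).
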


\begin{proof}
Let $R_n$ denote the matrix $R$ for $K_n$ and let $m = (n+25)/2$. 
By induction, $R_n$ is $(m-1) \times 2m$. Since we have formed $R$ omitting the equation for edge 10, 
we can see the difference between $R_{n+2}$ and $R_n$ by
keeping track of the changes to the edge equations
as in our discussion of Table~\ref{tab:Knp2IM}
above: we modify the last edge equation for $K_n$ and add an entirely new one.
Comparing the two matrices, we can think of $R_{n+2}$ as being formed from $R_n$
by appending two columns at right and a row at bottom that are zero
except for three entries. In $R_{n+2}$, the three non-zero entries occur in the second last row and last column: $(R_{n+2})_{m-1, 2m+2} = -1$,
and in the last row and third and second to last columns: $(R_{n+2})_{m,2m} = -1$ and $(R_{n+2})_{m,2m+1} = 2$.  
The entries in the last row correspond to the added edge equation
with edges $(m-1)$ (02), $m$ (23), $m$ (01) while
the $-1$ in the second to last row corresponds to adding $m$ (13) to the last edge equation for $K_n$.

Since $n \geq 5$, 
the last two terms in the list of degeneration indices $I_1$ are both $\infty$,
which means that the last two columns of the $R(I_1)$ matrix
for $K_{n+2}$ combine the negatives of the last four columns of $R_{n+2}$. Since the last row of
$R_{n+2}$ is $(0,0, \ldots, 0, -1, 2,0)$, the last row of $R(I_1)$ is 
$$(0,0,0, \ldots, 0, (-1)0 + (-1)(-1), (-1)2 + (-1)0) = (0,0,0, \ldots, 0,1,-2)$$
as required. The last column of $R(I_1)$ comes from the sum of the negative of the last two columns of $R_{n+2}$,
which are all zeros, except for the last entry, which becomes $-2$ as we just noticed, and the penultimate
which combines $0$ and $-1$ to give $(-1)0 + (-1)1 = 1$. Thus, the last column of $R(I_1)$ is also 
of the requisite form.
\end{proof}

\begin{proof}[Proof of Proposition~\ref{prop:I1degv}]
We use a two step induction. 

As mentioned above,
$$\mathbf{d}_5 = (1, 4, 1, 5, 1, 4, 4, 4, 6, 5, 14, 9, 10, 4, 2)$$
and, by a similar, direct calculation,
$$\mathbf{d}_7 = 
(-1, -6, -1, -7, -1, -6, -6, -6, -8, -7, -20, -13, -14, -6, -4, -2),
$$
which establishes the base cases for the induction.

Assume for $n \geq 5$, we have degeneration vectors 
$$\mathbf{d}_{n} = (-1)^{\frac{n-1}{2}} (1,n-1,1,n,1,n-1,n-1,n-1, n+1, n, 3n-1, 2n-1, 2n, n-1, n-3 \ldots, 2)$$
and 
$$\mathbf{d}_{n+2}= (-1)^{\frac{n+1}{2}} (1,n+1,1,n+2,1,n+1,n+1,n+1, n+3, n+2, 3n+5, 2n+3, 2n+4, n+1, n-1, n-3, \ldots, 2).$$
By expanding along the last row, $(0, 0, 0, \ldots 0, 1, -2)$ of $R(I_1)$ for $K_{n+4}$ (as in the Lemma) and, when necessary, the 
last column $(0, 0, 0, \ldots 0, 1)^T$ of the resulting submatrix, we find a simple relationship 
between the degeneration vectors.
The degeneration vector for $K_{n+4}$ is
\begin{align*}
\mathbf{d}_{n+4} = & -2(\mathbf{d}_{n+2},0) - (\mathbf{d}_{n},0,0) +  (-1)^{\frac{n+3}{2}} (\mathbf{0},0,2) \\
             = & (-1)^{\frac{n+1}{2}} (-2+1, -2(n+1)+n-1, -2+1, -2(n+2) + n,-2+1,-2(n+1)+n-1, \\
                 & \mbox{ } -2(n+1)+n-1,-2(n+1)+n-1, -2(n+3) + n+1,  -2(n+2) + n,\\ 
                 & \mbox{ } -2(3n+5) + 3n-1, -2(2n+3) + 2n-1, -2(2n+4) + 2n, \\
                 & \mbox{ } -2(n+1) + n-1, -2(n-1) + n-3, -2(n-3) + n-5, \ldots, -4,-2) \\
               = & (-1)^{\frac{n+3}{2}} (1, n+3, 1, n+4,1,n+3,n+3,n+3, n+5,  n+4, 3n+11, \\
                  & \hspace{6 cm} 2n+7, 2n+8, n+3, n+1, n-1, \ldots, 4,2) 
\end{align*}
as required.
\end{proof}
               
\section{Proof of Theorem~\ref{thm:Jn}}

In this section we prove Theorem~\ref{thm:Jn}. 
The method is largely similar to that  for Theorem~\ref{thm:Kn} and we omit some of the details. 

\begin{table}
\centering
\begin{tabular}{c|c|c|c|c}
Tetrahedron & Face 012 & Face 013 & Face 023 & Face 123 \\ \hline 
0 & 7(123) & 10(302) & 15(301) & 8(023) \\
1 & 3(013) & 14(321) & 13(231) & 2(012) \\
2 & 1(123) & 3(012) & 4(123) & 4(120) \\
3 & 2(013) & 1(012) & 15(312) & 14(302) \\
4 & 2(312) & 10(132) & 5(320) & 2(023) \\
5 & 11(312) & 6(012) & 4(320) & 7(120)  \\
6 & 5(013) & 9(201) & 10(021) & 8(103) \\
7 & 5(312) & 8(012) & 11(320) & 0(012) \\
8 & 7(013) & 6(213) & 0(123) & 16(132) \\
9 & 6(130) & 14(103) & 12(123) & 10(103)  \\
10 & 6(032) & 9(213) & 0(130) & 4(031) \\
11 & 12(032) & 12(012) & 7(320) & 5(120) \\
12 & 11(013) & 17(103) & 11(021) & 9(023) \\
13 & 17(203) & 14(012) & 15(032) & 1(302) \\
14 & 13(013) & 9(103) & 3(231) & 1(310) \\
15 & 16(320) & 0(230) & 13(032) & 3(230) \\
16 & 17(102) & 17(132) & 15(210) & 8(132) \\
17 & 16(102) & 12(103) & 13(102) & 16(031)
\end{tabular}
\caption{A triangulation of $J_2$.}
\label{tab:J2} 
\end{table}

\begin{table}
\centering
\begin{tabular}{c|c|c|c|c}
Tetrahedron & Face 012 & Face 013 & Face 023 & Face 123 \\ \hline 
0 & 7(123) & 10(302) & 15(301) & 8(023) \\
\vdots & \vdots & \vdots & \vdots & \vdots \\
5 & 11(312) & 6(012) & 4(320) & 7(120)  \\
6 & 5(013) & 9(201) & 18(021) & 8(103) \\
7 & 5(312) & 8(012) & 11(320) & 0(012) \\
8 & 7(013) & 6(213) & 0(123) & 16(132) \\
9 & 6(130) & 14(103) & 12(123) & 10(103)  \\
10 & 18(013) & 9(213) & 0(130) & 4(031) \\
11 & 12(032) & 12(012) & 7(320) & 5(120) \\
\vdots & \vdots & \vdots & \vdots & \vdots \\
17 & 16(102) & 12(103) & 13(102) & 16(031) \\
18 & 6(032) & 10(012) & 19(021) & 19(103) \\
19 & 18(032) & 18(213) & 20(021) & 20(103) \\
20 & 19(032) & 19(213) & 20(231) & 20(302) 
\end{tabular}
\caption{A triangulation of $J_8$.}
\label{tab:J8} 
\end{table}

\begin{table}
\centering
\begin{tabular}{c|c|c|c|c}
Tetrahedron & Face 012 & Face 013 & Face 023 & Face 123 \\ \hline 
6 & 5(013) & 9(201) & 18(021) & 8(103) \\
10 & 18(013) & 9(213) & 0(130) & 4(031) \\
18 & 6(032) & 10(012) & 18(231) & 18(302) 
\end{tabular}
\caption{A triangulation of $J_4$ showing only the tetrahedra that have changed compared to $J_2$.}
\label{tab:J4} 
\end{table}

\begin{table}
\centering
\begin{tabular}{c|c|c|c|c}
Tetrahedron & Face 012 & Face 013 & Face 023 & Face 123 \\ \hline 
6 & 5(013) & 9(201) & 18(021) & 8(103) \\
10 & 18(013) & 9(213) & 0(130) & 4(031) \\
18 & 6(032) & 10(012) & 19(021) & 19(103) \\
19 & 18(032) & 18(213) & 19(231) & 19(302) 
\end{tabular}
\caption{A triangulation of $J_6$ showing only the tetrahedra that have changed compared to $J_2$.}
\label{tab:J6} 
\end{table}

We begin with an 18 tetrahedron triangulation of the exterior of $J_2$, given in Table~\ref{tab:J2}. We will insert the layered torus along
the hexagon formed where the $023$ face of tetrahedron 6 is identified with the $012$ face of tetrahedron 10. For example, Table~\ref{tab:J8} 
gives the resulting triangulation for $J_8$, where we have omitted many of the rows that show no change. In fact, aside from the added lines for
the new tetrahedra 18, 19, and 20, the only changes are in the rows for tetrahedra 6 and 10. As further examples, we include Table~\ref{tab:J4}
and \ref{tab:J6} which list only the tetrahedra of $J_4$ and $J_6$ that have changed compared to $J_2$.

\begin{table}
\centering
\begin{tabular}{c|c|l}
Edge & Degree & Tetrahedra (vertices) \\ \hline
0 & 7  & 0 (01), 10 (03), 9 (23), 12 (23), 11 (12), 5 (12), 7 (12)  \\
1 & 7 & 0 (02), 7 (13), 8 (12), 16 (13), 17 (23), 13 (02), 15 (03)  \\
2 & 6 & 0 (03), 15 (13), 3 (02), 2 (03), 4 (13), 10 (23) \\
3 & 7 & 0 (12), 8 (02), 7 (03), 11 (03), 12 (02), 11 (02), 7 (23)  \\
4 & 7 & 0 (13), 10 (02), 18 (03), 19 (01), 18 (12), 6 (23), 8 (03) \\
5 & 4 & 0 (23), 8 (23), 16 (32), 15 (01) \\
6 & 6 & 1 (01), 14 (23), 3 (13), 1 (12), 2 (01), 3 (01) \\
7 & 4 & 1 (02),   3 (03), 15 (23), 13 (23) \\
8 & 6 & 1 (03),  13 (12), 17 (03), 12 (13), 9 (03), 14 (13) \\
9 & 7 & 1 (13), 14 (12), 13 (13), 1 (23), 2 (12), 4 (12), 2 (02)  \\
10 & 6 & 2 (13), 3 (12), 14 (03), 9 (13), 10 (13), 4 (01)  \\
11 & 7 & 2 (23), 4 (02), 5 (23), 7 (02), 11 (23), 5 (02), 4 (23) \\
12 & 8 & 3 (23), 14 (02), 13 (03), 15 (02), 16 (03), 17 (12), 16 (02), 15 (12) \\
13 & 11 & 4 (03), 5 (03), 6 (02), 10 (12), 18 (02), 18 (13), 19 (02), 19 (13), 20 (02), 20 (13), 20 (23) \\
14 & 5 & 5 (01), 6 (01), 9 (02), 12 (12), 11 (13) \\
15 & 4 & 5 (13), 6 (12), 8 (01), 7 (01) \\
16 & 4 & 6 (03), 18 (01), 10 (01), 9 (12) \\
17 & 7 & 6 (13), 9 (01), 14 (01), 13 (01), 17 (02), 16 (12), 8 (13) \\
18 & 6 & 11 (01), 12 (01), 17 (01), 16 (01), 17 (13), 12 (03) \\
19 & 4 & 18 (23), 19 (03), 19 (12), 20 (01) \\
20 & 3 & 19 (23), 20 (03), 20 (12) 
\end{tabular}
\caption{Edge equations for $J_8$}
\label{tab:J8IM} 
\end{table}

\begin{table}
\centering
\begin{tabular}{c|c|l}
Edge & Degree & Tetrahedra (vertices) \\ \hline
0 & 7  & 0 (01), 10 (03), 9 (23), 12 (23), 11 (12), 5 (12), 7 (12)  \\
1 & 7 & 0 (02), 7 (13), 8 (12), 16 (13), 17 (23), 13 (02), 15 (03)  \\
2 & 6 & 0 (03), 15 (13), 3 (02), 2 (03), 4 (13), 10 (23) \\
3 & 7 & 0 (12), 8 (02), 7 (03), 11 (03), 12 (02), 11 (02), 7 (23)  \\
4 & 7 & 0 (13), 10 (02), 18 (03), 19 (01), 18 (12), 6 (23), 8 (03) \\
5 & 4 & 0 (23), 8 (23), 16 (32), 15 (01) \\
6 & 6 & 1 (01), 14 (23), 3 (13), 1 (12), 2 (01), 3 (01) \\
7 & 4 & 1 (02),   3 (03), 15 (23), 13 (23) \\
8 & 6 & 1 (03),  13 (12), 17 (03), 12 (13), 9 (03), 14 (13) \\
9 & 7 & 1 (13), 14 (12), 13 (13), 1 (23), 2 (12), 4 (12), 2 (02)  \\
10 & 6 & 2 (13), 3 (12), 14 (03), 9 (13), 10 (13), 4 (01)  \\
11 & 7 & 2 (23), 4 (02), 5 (23), 7 (02), 11 (23), 5 (02), 4 (23) \\
12 & 8 & 3 (23), 14 (02), 13 (03), 15 (02), 16 (03), 17 (12), 16 (02), 15 (12) \\
13 & $n+5$ & 4 (03), 5 (03), 6 (02), 10 (12), 18 (02), 18 (13), 19 (02), 19 (13), 20 (02), 20 (13), \ldots \\
 &  & \ldots, $(m-1)$ (02), $(m-1)$ (13), $m$ (02), $m$ (13), $m$ (23) \\
14 & 5 & 5 (01), 6 (01), 9 (02), 12 (12), 11 (13) \\
15 & 4 & 5 (13), 6 (12), 8 (01), 7 (01) \\
16 & 4 & 6 (03), 18 (01), 10 (01), 9 (12) \\
17 & 7 & 6 (13), 9 (01), 14 (01), 13 (01), 17 (02), 16 (12), 8 (13) \\
18 & 6 & 11 (01), 12 (01), 17 (01), 16 (01), 17 (13), 12 (03) \\
19 & 4 & 18 (23), 19 (03), 19 (12), 20 (01) \\
20 & 4 & 19 (23), 20 (03), 20 (12), 21 (01) \\
\vdots & \vdots & \vdots  \\
 $m-1$ & 4 & $(m-2)$ (23), $(m-1)$ (03), $(m-1)(12)$, $m$ (01) \\
$m$ & 3 & $(m-1)$ (23), $m$ (03), $m$ (12) 
\end{tabular}
\caption{Edge equations for $J_{n+2}$ for $n \geq 6$ and even. Here 
$m = (n+34)/2$.}
\label{tab:Jnp2IM} 
\end{table}

Table~\ref{tab:J8IM} gives the edge equations for $J_8$. Notice the similarity between the last two edge equations. Here, Equation 13 
is the analogue of Equation 10 in the previous section. We obtain a triangulation of the exterior of $J_{10}$ by adding tetrahedron 21. This 
entails adding $21(01), 21 (13), 21 (23)$ to the edge 
Equation 13 and then removing $20 (23)$. In addition, we must add $21 (01)$ to Equation 20 and add a new equation, Equation 21, with three edges: 
$20 (23), 21 (03), 21 (12)$. 

Table~\ref{tab:Jnp2IM} shows the general form of 
the edge equations for $J_{n+2}$, after adding tetrahedron $m = (n+34)/2$.
Assuming we construct matrices without using the redundant Equation 13, the change in going from the matrix $R$ for $J_n$ to that for $J_{n+2}$
occurs in the last few rows and columns, corresponding to adding $m(01)$ to Equation $m-1$ and introducing a new Equation $m$ as in the table; 
see Lemma~\ref{lem:RIJ} below.

In contrast to the $K_n$ sequence, the components of the link generating the sequence $J_n$ have linking number 0 and
we don't need to modify the longitude when adding tetrahedra, except for appending 0's. 
Using SnapPy~\cite{SnapPy} in Sage~\cite{Sage} we find that the meridian and longitude for $J_n$ ($n \geq 2$, even) are
\begin{align*}
\frak{m}&= (0, 0, 0, 0, 0, 0, 0, 0, -1, 0, 0, 0, 0, 0, 0, 0, 0, 0, 0, 0, 0, 0, 0, 0, 0, 0, 0, 0, 0, 0, 1, -1, 0, 1, 1, 0, 0, \ldots, 0) \mbox{ and}  \\
\frak{l}& = (1, -1, 1, -2, -1, 1, -1, 1, -8, -1, -1, 2, -2, 1, 1, -1, -2, 1, -1, 1, -2,  \\ 
&  \hspace{2 in} 3, 0, -1, 0, 1, 0, 1, -1, -1, 10, -10, 1, 7, 7, 2, 0, 0, 0, \ldots, 0) 
 \end{align*}
where the meridian (respectively, longitude) terminates in a sequence of $n-1$ (resp.~$n-2$) zeroes.

The two lists of degeneracy indices for $J_2$ corresponding to the boundary slopes $-\frac{14(2) - 2}{2} = -13$ 
and $-\frac{10(2)+8}{2+1} = -28/3$ are, respectively,
\begin{align*}
I_1 & = (\infty, \infty, 0, \infty, \infty, 0, 1, 0, 1, 0, 1, 0, \infty, 0, 1, \infty, 0, 0), \mbox{ and } \\
I_2 & = (\infty, 1, 0, \infty, \infty, 0, 1, 0, 0, 0, 1, 0, \infty, \infty, 1, \infty, 0, 0). \\
\end{align*}
We have chosen $I_1$ and $I_2$ so that the degeneration vectors $d(I_i)$ have all entries negative and, therefore,
each corresponds to a boundary slope.
Thereafter, to realize the boundary slopes of Theorem~\ref{thm:Jn} we append a 0 to the list each time we add a tetrahedron to the triangulation.

\begin{prop} 
\label{prop:JI1degv}
Let $n \geq 6$ be even. The $I_1$ degeneration vector for $J_n$ is
$$(-1)^{\frac{n}{2}} (2n-1, n, n, 1, n-1, n+1, n-1, 2n-1, n, 3n-2, 2n, n-1, n-1, n-1, 1, 1, 1, 1, n-2, n-4, \ldots, 4, 2).$$
\end{prop}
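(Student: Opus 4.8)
The plan is to follow the proof of Proposition~\ref{prop:I1degv} almost verbatim, replacing Lemma~\ref{lem:RI1} by its $J_n$-analogue. First I would establish that the matrix $R(I_1)$ for $J_{n+2}$ is obtained from that for $J_n$ by appending the row $(0,0,\dots,0,1,-2)$ at the bottom and the column $(0,0,\dots,0,1,-2)^{T}$ at the right. Reading off the changes recorded in Table~\ref{tab:Jnp2IM}, the passage from $J_n$ to $J_{n+2}$ modifies only Edge Equation~$m-1$ (by appending $m(01)$), introduces the new Edge Equation~$m$ with edges $(m-1)(23),m(03),m(12)$, and alters the redundant Equation~$13$, which is omitted from $R$ and hence invisible. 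Using the edge-to-parameter dictionary ($(01),(23)\mapsto z$; $(02),(13)\mapsto w^{-1}$; $(03),(12)\mapsto z^{-1}w$), the new Equation~$m$ contributes $r'_{m-1}=+1$, $r'_m=-2$, $r''_m=+2$, while the only change to an old equation is a $+1$ in the new $z_m$ column of row~$m-1$. The point that distinguishes this from the $K_n$ case is that here one appends the degeneration index $0$ rather than $\infty$, so the new column of $R(I_1)$ is the $z_m$-column of $R_{n+2}$; its only nonzero entries are the $+1$ in row~$m-1$ and the $-2$ in row~$m$, giving exactly $(0,\dots,0,1,-2)^{T}$. The new bottom row likewise reads $+1$ in column~$m-1$ (index $0$ selects $z_{m-1}$, supplied by $(m-1)(23)$) and $-2$ in column~$m$.

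Next I would record the two base cases, computing the signed minors directly (for instance in SnapPy/Sage) to obtain $\mathbf{d}_6$ and $\mathbf{d}_8$ and checking them against the claimed formula at $n=6$ and $n=8$. Since the recurrence advances $n$ by $4$, these two anchors suffice to reach every even $n\geq 6$.

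Because the appended row and column have the same $(0,\dots,0,1,-2)$ shape as in the $K_n$ setting, expanding $\det R(I_1)$ for $J_{n+4}$ along the new bottom row and, where necessary, along the new last column yields the same second-order recurrence
\[
\mathbf{d}_{n+4}=-2(\mathbf{d}_{n+2},0)-(\mathbf{d}_{n},0,0)+(-1)^{\frac{n+4}{2}}(\mathbf{0},0,2).
\]
Factoring out the common sign $(-1)^{n/2}$ reduces this to $(\dots)_{n+4}=2(\dots)_{n+2}-(\dots)_{n}+(\mathbf{0},0,2)$, and I would then verify the closed form of Proposition~\ref{prop:JI1degv} entrywise. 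The fixed $18$-term prefix is immediate; for instance the leading entry gives $2(2(n+2)-1)-(2n-1)=2(n+4)-1$. The shifting tail $n-2,n-4,\dots,4,2$ is handled by aligning the vectors from the right, where the identity $2\cdot 2(j-1)-2(j-2)=2j$ reproduces the $j$-th entry from the end of $\mathbf{d}_{n+4}$ and the correction term $(\mathbf{0},0,2)$ supplies the final~$2$.

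I expect the main obstacle to be the $J_n$-analogue of Lemma~\ref{lem:RI1}, specifically the bookkeeping for unfolding the terminal tetrahedron $m-1$: one must confirm that unfolding it, so that its formerly self-glued faces now meet $m$, changes only the omitted Equation~$13$ together with the single new $z_m$ entry of Equation~$m-1$, and therefore leaves every old column of $R(I_1)$, in particular the $z_{m-1}$ and $w_{m-1}$ columns, untouched in the old rows. Comparing the explicit tables (for example Equation~$20$ in Table~\ref{tab:J8IM} against its form in Table~\ref{tab:Jnp2IM}) is the concrete way to pin this down. Once this clean-append structure is in hand, the recurrence and its verification are routine, mirroring the $K_n$ computation.
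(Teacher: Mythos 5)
Your proposal is correct and follows essentially the same path as the paper: the appending lemma you describe is precisely the paper's Lemma~\ref{lem:RIJ} (proved there with the same bookkeeping, including the key observation that the appended degeneration indices are $0$ rather than $\infty$, so the new column of $R(I_1)$ is the $z_m$-column with entries $1$ and $-2$), and the paper then runs exactly your two-step induction with base cases $\mathbf{d}_6$, $\mathbf{d}_8$ and the recurrence $\mathbf{d}_{n+4}=-2(\mathbf{d}_{n+2},0)-(\mathbf{d}_{n},0,0)+(-1)^{\frac{n+4}{2}}(\mathbf{0},0,2)$. Your entrywise verification, including the right-aligned treatment of the tail $n-2,n-4,\ldots,4,2$, matches the paper's computation.
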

 
\begin{prop} 
\label{prop:JI2degv}
Let $n \geq 6$ be even. The $I_2$ degeneration vector for $J_n$ is
$$(-1)^{\frac{n}{2}} (2n+1,n+1,n+1,1,n,n+2,n,2n,n,2n,3n+3,n,2n+1,1,n+2,1,1,1,n-2,n-4, \ldots, 4, 2).$$
\end{prop}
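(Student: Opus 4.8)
The plan is to prove Proposition~\ref{prop:JI2degv} by the same two–step induction that established Proposition~\ref{prop:I1degv}, substituting for Lemma~\ref{lem:RI1} its $J_n$ counterpart (Lemma~\ref{lem:RIJ}). The whole argument rests on identifying how the matrix $R(I_2)$ changes when a single tetrahedron is glued in, so I would begin there. Reading off the three edge–equation changes recorded in Table~\ref{tab:Jnp2IM} — adding $m(01)$ to Equation $m-1$, introducing the new Equation $m$ with edges $(m-1)(23)$, $m(03)$, $m(12)$, and enlarging the redundant (hence omitted) Equation 13 — and using the conventions that $(01),(23)$ contribute to the $z$-exponent, $(02),(13)$ to the $w$-exponent, and $(03),(12)$ to both, I expect $R_{n+2}$ to differ from $R_n$ only in one new row and two new columns. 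The one genuinely new feature compared with the $K_n$ case is that for $J_n$ we append $0$ rather than $\infty$ to the degeneration indices, so the new column of $R(I_2)$ is the pure $z$-exponent column of tetrahedron $m$ rather than a negated sum of a $z$- and a $w$-column. I would check that, because the edges $m(02),m(13),m(23)$ all fall into the omitted Equation 13, tetrahedron $m$ meets only Equations $m-1$ and $m$, so that both the appended row and the appended column of $R(I_2)$ come out to be $(0,\dots,0,1,-2)$ — exactly the shape asserted in Lemma~\ref{lem:RI1}. This is the content I would record as Lemma~\ref{lem:RIJ}.

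With the lemma in hand, the induction proceeds essentially verbatim as for $K_n$. I would establish the base cases by computing the $I_2$ degeneration vectors $\mathbf{d}_6$ and $\mathbf{d}_8$ directly, as signed minor sequences of the corresponding $R(I_2)$ matrices, and checking them against the formula. Then, assuming the formula for $\mathbf{d}_n$ and $\mathbf{d}_{n+2}$, I would expand each minor of $R(I_2)$ for $J_{n+4}$ along its last row $(0,\dots,0,1,-2)$ and, where needed, along the resulting last column, to obtain the recurrence
\[
\mathbf{d}_{n+4} = -2(\mathbf{d}_{n+2},0) - (\mathbf{d}_n,0,0) + (-1)^{\frac{n}{2}}(\mathbf{0},0,2).
\]
The correction term lands only in the final coordinate, where it supplies the newly appended $2$ at the tail.

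I would then substitute the proposed closed form into this recurrence and confirm it coordinate by coordinate: each entry of the form $an+b$ reproduces $a(n+4)+b$, since $-2\bigl(a(n+2)+b\bigr)-(an+b)$ collapses correctly once the sign factors $(-1)^{n/2}$, $(-1)^{(n+2)/2}=-(-1)^{n/2}$, and $(-1)^{(n+4)/2}=(-1)^{n/2}$ are tracked; the constant entries $1$ are fixed points of the recurrence; and the arithmetic tail $n-2,n-4,\dots,4,2$ shifts up as required. I expect the main obstacle to be Lemma~\ref{lem:RIJ} itself: getting the appended row and column exactly right demands careful attention to the fact that Equation 13 is the one dropped, so several edges of tetrahedron $m$ never enter $R$, and to the $0$ versus $\infty$ distinction in the degeneration index, which changes which column of $R_{n+2}$ is selected. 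Once that lemma is correct, the inductive step and the sign bookkeeping are mechanical, and the exact parallel with the $K_n$ computation gives a strong independent check on the answer.
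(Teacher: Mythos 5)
Your proposal is correct and follows essentially the same route as the paper: the key structural fact (that passing from $J_n$ to $J_{n+2}$ appends the row $(0,\ldots,0,1,-2)$ and column $(0,\ldots,0,1,-2)^T$ to $R(I_2)$, with the appended degeneration index $0$ selecting the pure $z$-exponent column of the new tetrahedron) is exactly the paper's Lemma~\ref{lem:RIJ}, and your two-step induction with base cases $\mathbf{d}_6$, $\mathbf{d}_8$ and the recurrence $\mathbf{d}_{n+4} = -2(\mathbf{d}_{n+2},0) - (\mathbf{d}_n,0,0) + (-1)^{\frac{n+4}{2}}(\mathbf{0},0,2)$ (your sign $(-1)^{n/2}$ agrees since $n$ is even) is precisely the paper's proof of Proposition~\ref{prop:JI2degv}. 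The coordinate-wise verification, including the fixed entries $1$ and the shifted tail $n-2, n-4, \ldots, 4, 2$, matches the paper's computation.
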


\begin{proof} [Proof of Theorem~\ref{thm:Jn}]
Assume $n \geq 6$. Using the degeneration vectors of the last 
two Propositions 
and the meridian and longitude given above,
we deduce the following valuations and boundary slopes. 
For $I_1$:
$(v(\lambda), v(\mu)) = (-1)^{\frac{n+2}{2}}(14n-2,n)$ 
 with boundary slope $ - \frac{14n-2}{n}$; and 
for $I_2$: 
$(v(\lambda), v(\mu)) = (-1)^{\frac{n+2}{2}}(10n+8,n+1)$ 
with boundary slope $- \frac{10n+8}{n+1}$.

For $n = 2$, using the list of degeneration indices $I_1$ given above, by direct calculation, the degeneration vector is 
$d(I_1) =   -(3, 2, 2, 1, 1, 3, 1, 3, 2, 4, 4, 1, 1, 1, 1, 1, 1, 1)$ 
giving valuation $(v(\lambda), v(\mu)) = (26,2)$ 
and boundary slope $-13$.
Using $I_2$, we have 
$d(I_2) = -(5, 3, 3, 1, 2, 4, 2, 4, 2, 4, 9, 2, 5, 1, 4, 1, 1, 1)$  
with valuation $(v(\lambda), v(\mu)) = (28,3)$
and boundary slope $-28/3$. 
For $n = 4$, we have, with indices $I_1$: degeneration vector $(7, 4, 4, 1, 3, 5, 3, 7, 4, 10, 8, 3, 3, 3, 1, 1, 1, 1, 2)$,
valuation $(-54,-4)$, and slope $-27/2$; and, with indices $I_2$:  degeneration vector $(9, 5, 5, 1, 4, 6, 4, 8, 4, 8, 15, 4, 9, 1, 6, 1, 1, 1, 2)$,
valuation $(-48,-5)$, and slope $-48/5$. 
\end{proof}

It remains to prove Propositions~\ref{prop:JI1degv} and \ref{prop:JI2degv}.

\begin{lem}
\label{lem:RIJ}%
Let $n \geq 6$ be even.
The $R(I_1)$ matrix of $J_{n+2}$ is formed from that for $J_n$ by adding the row
$(0, 0, 0, \ldots 0,1,-2)$ at bottom and column $(0,0,0, \ldots, 0,1,-2)^T$ at right.
Similarly, the $R(I_2)$ matrix of $J_{n+2}$ comes from that for $J_n$ by adding the row
$(0, 0, 0, \ldots 0,1,-2)$ at bottom and column $(0,0,0, \ldots, 0,1,-2)^T$ at right.
\end{lem}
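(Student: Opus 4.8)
The plan is to follow the proof of Lemma~\ref{lem:RI1} for the $K_n$ family, tracking how the single added tetrahedron $m=(n+34)/2$ changes first the gluing matrix $R$ and then its degeneration. I would begin by reading off from the discussion surrounding Table~\ref{tab:Jnp2IM} the two changes to the edge equations in passing from $J_n$ to $J_{n+2}$: we add $m(01)$ to Equation $m-1$, and we introduce the new Equation $m$ with edges $(m-1)(23)$, $m(03)$, $m(12)$ (the redundant Equation 13 is omitted from $R$, so its modification is irrelevant). Using the convention recorded in Section 3 — edges $(01),(23)$ contribute to the $z$-exponent, $(02),(13)$ to $w^{-1}$, and $(03),(12)$ to $z^{-1}w$ — I would then compute the entries of $R_{n+2}$ relative to $R_n$. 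Appending the pair of columns $z_m,w_m$ at the right and the row for Equation $m$ at the bottom, the only nonzero new entries are a $+1$ in the $z_m$ column of the second-to-last row (from $m(01)$), and, in the last row, a $+1$ in the $z_{m-1}$ column (from $(m-1)(23)$), a $-2$ in the $z_m$ column and a $+2$ in the $w_m$ column (from $m(03)$ and $m(12)$). The key structural point is that the contribution $m(01)$ lands in the \emph{new} $z_m$ column, so the top-left $(m-1)\times 2m$ block of $R_{n+2}$ is exactly $R_n$; this is what drives the induction.

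Next I would apply the degeneration. The one genuine difference from the $K_n$ case is that here we append the index $0$ (rather than $\infty$) for each new tetrahedron, so for $n\ge 6$ tetrahedron $m-1$ carries index $0$ in $J_n$ and both $m-1$ and $m$ carry index $0$ in $J_{n+2}$. By the recipe of Section~\ref{sec:Kab}, an index $0$ selects the $r'$ (that is, the $z$) column. Hence column $m$ of $R(I_1)$ is simply the $z_m$ column of $R_{n+2}$, which by the previous step is $(0,\dots,0,1,-2)^T$ — giving the claimed last column directly, without the negative-sum step needed in Lemma~\ref{lem:RI1}. For the last row (Equation $m$), the index-$0$ choice at columns $m-1$ and $m$ selects the $z_{m-1}$ entry $+1$ and the $z_m$ entry $-2$ while discarding the $w_m$ entry $+2$; all earlier columns vanish in this row. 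This produces the last row $(0,\dots,0,1,-2)$. Since the degeneration indices for tetrahedra $0,\dots,m-1$ are unchanged and the corresponding block of $R_{n+2}$ equals $R_n$, the top-left $(m-1)\times m$ block of $R(I_1)_{n+2}$ is $R(I_1)_n$, completing the $I_1$ statement.

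For $I_2$ I would note that the argument is verbatim: the matrix $R_{n+2}$ is independent of the choice of $I$, and since we also append $0$ for each new tetrahedron in the $I_2$ list, tetrahedra $m-1$ and $m$ again carry index $0$, so the degeneration of the new row and columns is identical and yields the same row and column $(0,\dots,0,1,-2)$.

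I expect the work to be bookkeeping rather than a real obstacle; the two points requiring genuine care are the sign/parameter translation in the first step and, in particular, verifying that the index-$0$ degeneration correctly selects the $z$-columns so that the $w_m$ contribution $+2$ is discarded and the $z_{m-1}$ contribution $+1$ is retained — this is precisely what makes the entries $1$ and $-2$ (and not other values) appear. I would confirm the base step $J_6\to J_8$ against the explicit edge equations of Table~\ref{tab:J8IM} before finalizing.
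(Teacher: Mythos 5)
Your proposal is correct and follows essentially the same route as the paper's proof: identify the two changed edge equations from Table~\ref{tab:Jnp2IM}, record the four nonzero new entries of $R_{n+2}$ (namely $(R_{n+2})_{m-1,2m+1}=1$, $(R_{n+2})_{m,2m-1}=1$, $(R_{n+2})_{m,2m+1}=-2$, $(R_{n+2})_{m,2m+2}=2$), and then observe that the appended degeneration indices are $0$, so the new columns of $R(I_1)$ and $R(I_2)$ are exactly the $z$-columns $2m-1$ and $2m+1$ of $R_{n+2}$, yielding the claimed row and column $(0,\ldots,0,1,-2)$. Your explicit remark that $m(01)$ lands in the new $z_m$ column, so the top-left block of $R_{n+2}$ is untouched, is only implicit in the paper but is the same underlying point.
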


\begin{proof}
Let $R_n$ denote the matrix $R$ for $J_n$ and let $m = (n+34)/2$. 
By induction, $R_n$ is $(m-1) \times 2m$. Since we have formed $R$ omitting edge Equation 13, 
constructing $R_{n+2}$ from $R_n$ only involves changes in the last two edge equations: 
we add $m (01)$ to the last edge equation for $J_n$ and create an entirely new equation
with three terms: $(m-1) (23)$, $m (03)$, and $m (12)$,
This means we can build $R_{n+2}$ from $R_n$
by appending two columns at right and a row at bottom that are zero
except for four entries. 
In $R_{n+2}$, the four non-zero entries are $(R_{n+2})_{m-1, 2m+1} = 1$, $(R_{n+2})_{m,2m-1} = 1$
and $(R_{n+2})_{m,2m+1} = -2$ and $(R_{n+2})_{m,2m+2} = 2$. 

Since $n \geq 6$, 
the last two terms in the list of degeneration indices $I_1$ are both $0$,
which means that the last two columns of the $R(I_1)$ matrix
for $K_{n+2}$ are columns $2m-1$ and $2m+1$ of $R_{n+2}$. Since the last row of
$R_{n+2}$ is $(0,0, \ldots, 0, 1, 0, -2, 2)$, the last row of $R(I_1)$ is $$(0,0,0, \ldots, 0,1,-2)$$
as required. The last column of $R(I_1)$ is the second to last column of $R_{n+2}$,
which is all zeros, except for the last entry, $-2$, and the penultimate,
$1$. Thus, the last column of $R(I_1)$ is also of the requisite form.

The same argument shows that the last row and column of $R(I_2)$ are again
$(0, 0, 0, \ldots 0,1,-2)$ and $(0,0,0, \ldots, 0,1,-2)^T$, respectively.
\end{proof}

\begin{proof} [Proof of Proposition~\ref{prop:JI1degv}]
We use a two step induction. 

By direct calculation,
$$\mathbf{d}_6 = (-11, -6, -6, -1, -5, -7, -5, -11, -6, -16, -12, -5, -5, -5, -1, -1, -1, -1, -4, -2)$$
and
$$\mathbf{d}_8 = 
(15, 8, 8, 1, 7, 9, 7, 15, 8, 22, 16, 7, 7, 7, 1, 1, 1, 1, 6, 4, 2),
$$
which establishes the base cases for the induction.

Assume for $n \geq 6$, we have degeneration vectors 
$$\mathbf{d}_n = (-1)^{\frac{n}{2}} (2n-1, n, n, 1, n-1, n+1, n-1, 2n-1, n, 3n-2, 2n, n-1, n-1, n-1, 1, 1, 1, 1, n-2, n-4, \ldots, 4, 2)$$
and 
\begin{align*}
\mathbf{d}_{n+2} =& (-1)^{\frac{n+2}{2}} (2n+3, n+2, n+2, 1, n+1, n+3, n+1, 2n+3, n+2, 3n+4, 2n+4, \\
 & \hspace{7 cm} n+1, n+1, n+1, 1, 1, 1, 1, n, n-2, \ldots, 4, 2).
 \end{align*}
The degeneration vector for $J_{n+4}$ is
\begin{align*}
\mathbf{d}_{n+4} = & -2(\mathbf{d}_{n+2},0) - (\mathbf{d}_{n},0,0) + (-1)^{\frac{n+4}{2}} (\mathbf{0}, 0, 2) \\
             = & (-1)^{\frac{n+4}{2}} (2(2n+3)-(2n-1), 2(n+2)-n, 2(n+2) - n, 2(1) -1, 2(n+1)-(n-1),  \\
                 & \mbox{ }  2(n+3)-(n+1), 2(n+1)-(n-1), 2(2n+3)- (2n-1), 2(n+2) -n,   2(3n+4) - (3n-2),\\ 
                 & \mbox{ } 2(2n+4) - 2n, 2(n+1) - (n-1), 2(n+1) - (n-1),  2(n+1) - (n-1),\\
                 & \mbox{ } 2(1) -1, 2(1) - 1, 2(1) - 1, 2(1) - 1, 2n-(n-2), 2(n-2) -  (n-4), \ldots, 4,2) \\
               = & (-1)^{\frac{n+4}{2}} (2n+7, n+4, n+4, 1,n+3, n+5, n+3, 2n+7, n+4,  3n+10, 2n+8, \\
                  & \hspace{7 cm} n+3, n+3, n+3, 1,1,1,1, n+2, n, \ldots, 4,2) 
\end{align*}
as required.
\end{proof}

\begin{proof} [Proof of Proposition~\ref{prop:JI2degv}]
By direct calculation, we confirm the base cases
$$\mathbf{d}_6 = (-13, -7, -7, -1, -6, -8, -6, -12, -6, -12, -21, -6, -13, -1, -8, -1, -1, -1, -4, -2)$$
and
$$\mathbf{d}_8 = 
(17, 9, 9, 1, 8, 10, 8, 16, 8, 16, 27, 8, 17, 1, 10, 1, 1, 1, 6, 4, 2).
$$

Assume for $n \geq 6$, we have degeneration vectors 
$$\mathbf{d}_n =  (-1)^{\frac{n}{2}} (2n+1,n+1,n+1,1,n,n+2, n, 2n, n, 2n, 3n+3, n, 2n+1,1,n+2,1,1,1,n-2,n-4, \ldots, 4, 2).$$
and 
\begin{align*}
\mathbf{d}_{n+2} =& (-1)^{\frac{n+2}{2}} (2n+5, n+3, n+3, 1, n+2, n+4, n+2, 2n+4, n+2, 2n+4, \\
 & \hspace{6.5 cm} 3n+9, n+2, 2n+5, 1, n+4, 1, 1, 1, n, n-2, \ldots, 4, 2).
 \end{align*}
The degeneration vector for $J_{n+4}$ is
\begin{align*}
\mathbf{d}_{n+4} = & -2(\mathbf{d}_{n+2},0)  - (\mathbf{d}_{n},0,0) +(-1)^{\frac{n+4}{2}}  (\mathbf{0},0,2) \\
             = & (-1)^{\frac{n+4}{2}} (2(2n+5)-(2n+1), 2(n+3)-(n+1), 2(n+3) - (n+1), 2(1) -1, 2(n+2)-n,  \\
                 & \mbox{ }  2(n+4)-(n+2), 2(n+2)-n, 2(2n+4)- 2n, 2(n+2) -n,   2(2n+4) - 2n,\\ 
                 & \mbox{ } 2(3n+9) - (3n+3),  2(n+2) - n,  2(2n+5) -(2n+1), 2(1) - 1, 2(n+4) - (n+2),  \\
                 & \mbox{ } 2(1) - 1, 2(1) - 1, 2(1) - 1, 2n-(n-2), 2(n-2) -  (n-4), \ldots, 4,2) \\
               = & (-1)^{\frac{n+4}{2}} (2n+9, n+5, n+5, 1,n+4, n+6, n+4, 2n+8, n+4,  2n+8, 3n+15, \\
                  & \hspace{7 cm} n+4, 2n+9, 1, n+6, 1,1,1, n+2, n, \ldots, 4,2) 
\end{align*}
as required.
\end{proof}

\end{document}